\newtheorem{theorem}{Theorem}[section]
\newtheorem{lemma}[theorem]{Lemma}
\newtheorem{corollary}[theorem]{Corollary}
\theoremstyle{remark}
\newtheorem{remark}[theorem]{Remark}
\theoremstyle{definition}
\newtheorem{assumption}[theorem]{Assumption}
\newtheorem{example}[theorem]{Example}
\newtheorem{definition}[theorem]{Definition}
\newcommand\cbrk{\text{$]$\kern-.15em$]$}}
\newcommand\opar{\text{\,\raise.2ex\hbox{${\scriptstyle|}$}\kern-.34em$($}}
\newcommand\cpar{\text{$)$\kern-.34em\raise.2ex\hbox{${\scriptstyle |}$}}\,}
\def\qed{{\hfill $\Box$ \bigskip}}
\def\XXint#1#2#3{{\setbox0=\hbox{$#1{#2#3}{\int}$}
\vcenter{\hbox{$#2#3$}}\kern-.5\wd0}}
\newcommand\bL{\mathbb{L}}
\newcommand\bH{\mathbb{H}}
\newcommand\bZ{\mathbb{Z}}
\newcommand\bE{\mathbb{E}}
\newcommand\bN{\mathbb{N}}
\newcommand\fR{\mathbf{R}}
\newcommand\cF{\mathcal{F}}
\newcommand\cI{\mathcal{I}}
\newcommand\cS{\mathcal{S}}
\newcommand\cM{\mathcal{M}}
\newcommand\cT{\mathcal{T}}
\newcommand\cO{\mathcal{O}}
\newcommand{\mysection}[1]{\section{#1}
\setcounter{equation}{0}}
\begin{document}

\title[Degenerate PDE$\mathbf{s}$]
{On  the second order derivative estimates for  degenerate parabolic equations}

\author{Ildoo Kim}
\address{Department of mathematics, Korea university, 1 anam-dong
sungbuk-gu, Seoul, south Korea 136-701}
\email{waldoo@korea.ac.kr}
\thanks{The research of the first author was supported by Basic
Science Research Program through the National Research Foundation of
Korea(NRF) grant funded by the Korea
government(2017R1C1B1002830)}

\author{Kyeong-Hun Kim}
\address{Department of mathematics, Korea university, 1 anam-dong
sungbuk-gu, Seoul, south Korea 136-701}
\email{kyeonghun@korea.ac.kr}

\thanks{The research of the second author was supported by Basic
Science Research Program through the National Research Foundation of
Korea(NRF) grant funded by the Korea
government (2017R1D1A1B03033255)}

\subjclass[2010]{35K65, 35B65, 35K15}

\keywords{Time degenerate parabolic equations, Maximal $L_p$-regularity,  initial-value problem}

\begin{abstract}
We study the  parabolic  equation 
\begin{align}
				\notag
&u_t(t,x)=a^{ij}(t)u_{x^ix^j}(t,x)+f(t,x), \quad (t,x) \in [0,T] \times \fR^d  \\
&u(0,x)=u_0(x) 
				\label{main eqn}
\end{align}
 with the  full degeneracy of the leading coefficients, that is,
 \begin{align}
					\label{ab el}
(a^{ij}(t)) \geq \delta(t)I_{d\times d} \geq 0.
\end{align}
It is well known that if $f$ and $u_0$ are not smooth enough, say $f\in \bL_p(T):=L_p([0,T] ; L_p(\fR^d))$ and $u_0\in L_p(\fR^d)$, then in general the solution is only in $C([0,T];L_p(\fR^d))$, and thus derivative estimates  are not possible.

In this article we prove that  $u_{xx}(t,\cdot)\in L_p(\fR^d)$ on the set $\{t: \delta(t)>0 \}$ and 
\begin{align*}
\int^T_0 \|u_{xx}(t)\|^p_{L_p} \delta(t)dt\leq 
N(d,p) \left(\int^T_0 \|f(t)\|^p_{L_p}\delta^{1-p}(t)dt + \|u_0\|^p_{B^{2-2/ p}_p} \right),
\end{align*}
where $B^{2-2/ p}_p$ is the Besov space of order $2-2/p$.
We also prove that $u_{xx}(t,\cdot)\in L_p(\fR^d)$ for all $t>0$ and 
\begin{equation}
    \label{10.13.3}
\int^T_0 \|u_{xx}\|^p_{L_p(\fR^d)}\,dt \leq N \|u_0\|^p_{B^{2-2/(\beta p)}_p},
\end{equation}
if $f=0$, $\int^t_0 \delta(s)ds>0$ for each $t>0$,  and  a certain asymptotic behavior of $\delta(t)$ holds near $t=0$ (see (\ref{eqn extra})). Here 
$\beta>0$ is the constant related to the asymptotic behavior in (\ref{eqn extra}).  For instance, if $d=1$ and $a^{11}(t)=\delta(t)=1+\sin(1/t)$, then 
(\ref{10.13.3}) holds with $\beta=1$, which actually equals the maximal regularity of the heat equation $u_t=\Delta u$.
\end{abstract}

\maketitle

\mysection{introduction}

The study of  degenerate  elliptic and parabolic equations was started long time ago.  For instance, $L_2$-theory for the fully degenerate elliptic and parabolic equation was developed in \cite{O,O1,O2,O3}, and $L_p$-theory was introduced in \cite{gerencser2015solvability, krylov1986characteristics}.  In these articles, it is assumed that  $A:=(a^{ij})\geq 0$ and it depends both $t$ and $x$.  Under the such general degeneracy, the solution is only in $C([0,T]; L_p)$ and  can not be smoother than $f$ and $u_0$. This can be easily seen by taking $A=0$ so that 
\begin{equation}
      \label{extreme}
u(t)=u(0)+\int^t_0 fds, \quad \forall \, t>0.
\end{equation}
Thus, some extra conditions on the degeneracy are needed for  better regularity  of solutions.   Especially there are many articles  handling the degeneracy depending mainly on the space variable. See e.g.\cite{oleinik2012second,kohn1967degenerate,kim2007sobolev,fornaro2012degenerate} (analtyic methods) and \cite{stroock1972degenerate,freidlin1969smoothness} (probabilistic method).
These results focused on controlling the degeneracy of $a^{ij}(x)$ near the boundary of domains and used certain weights or considered splitting the boundary with a modified Dirichlet condition.

  In this article we investigate a maximal regularity of solutions under the condition that the degeneracy depends only on the time variable,
   that is $a^{ij}=a^{ij}(t)$. 
    Based on a standard perturbation argument one can extend our result to  the general case
    $a^{ij}=a^{ij}(t,x)$ (see Remark \ref{variable x}).  
    As mentioned in the abstract above, under  condition (\ref{ab el}),  we prove that  $u_{xx}(t)\in L_p$ on $\{t: A(t)>0\}$ and
 \begin{equation}
      \label{eqn second}
\|u_{xx}\|_{L_p([0,T], \delta(t) dt; L_p)} \leq N(d,p)  \left(\|f\|_{L_p([0,T],\delta^{1-p}(t)dt ; L_p)}+\|u_0\|_{B^{2-2/ p}_p} \right).
\end{equation}
  This might look absurd at  first glance if one considers the above example (see (\ref{extreme})), but in such case (\ref{eqn second}) is trivial because the left hand side is zero.

To estimate $u_{xx}$ without the help of the weight $\delta(t)dt$,  we  further assume  that   $\int^t_0 \delta(s) ds>0$ for any $t>0$,  $|a^{ij}(t)|\leq N \delta(t)$, and for some $\beta, t_0, N_0 >0$,
\begin{align}
					\label{eqn extra}
\left| \left\{ t \in [0,t_0] : h \leq \int_0^t \delta (s) ~ds <4h \right\} \right| \leq N_0 h^{1/\beta}, \quad \forall h \approx 0.
\end{align}
Under these conditions we prove
\begin{equation}
        \label{eqn zerof}
        \int^T_0 \|u_{xx}\|^p_{L_p}dt \leq N \|u_0\|^p_{B^{2\left(1-\frac{1}{\beta p} \right)}_p}, \quad \text{provided}\, f=0.
        \end{equation}
 If, for instance,
          \begin{equation}
                              \label{eqn ex1}     
                   t^{\beta_0} \leq  c\int^t_0 \delta(s)ds,  \quad \forall \, t>0
            \end{equation}          for some $c, \beta_0>0$, then 
       $ \left\{ t \in [0,1] : h \leq \int_0^t \delta (s) ~ds <4h \right\} \subset  \left[0, (4h)^{1/{\beta_0}} \right] $,
     and thus (\ref{eqn extra}) holds with $\beta=\beta_0$.  One can check that e.g. $\delta(t)=1+\sin(1/t)$ satisfies (\ref{eqn ex1}) 
     with $c=2$ and $\beta_0=1$ (see  Examples \ref{ex 3} and  \ref{ex 4} for detail and more examples).

Actually, (\ref{eqn second}) was already introduced in  \cite{kim2017heat} if $\delta(t)=t^{\alpha}$, where $\alpha>-1$.  Note that if $\delta(t)=t^{\alpha}$ then $\int^t_0 \delta(s)ds=(1+\alpha)^{-1}t^{\alpha+1}$ and thus (\ref{eqn ex1}) holds with $\beta_0=\alpha+1$.   We remark that our approach is somewhat different from the approach in \cite{kim2017heat}. If $\delta(t)=t^{\alpha}$ then  equation (\ref{main eqn}) has uniform ellipticity on $[e^{-n-1}, e^{-n}]$ for each $n\geq 1$ since  $t^{\alpha}\approx e^{-n\alpha}$.   Using a classical result for equations with uniform ellipticity one can get  some local  estimates on such intervals, and combining local estimates one can derive (\ref{eqn second}).  In this article, since our equation is not locally  elliptic, we do not follow the idea in \cite{kim2017heat}. Instead, we use a certain approximation method. We first assume $\delta(t)\geq \varepsilon >0$ and prove (\ref{eqn second})  with constant $N(d,p)$ independent of $\varepsilon>0$, then we take $\varepsilon\to 0$ for the general case.

This paper is organized as follows. In Section 2 we introduce some function spaces and our main results, Theorem \ref{main thm 1} and  Theorem \ref{main thm 2}.  Theorem \ref {main thm 1} is proved in Section 3, and Theorem \ref {main thm 2} is proved in Section 4.

We finish the introduction with  notation used in the article. 
\begin{itemize}

\item We use Einstein's summation convention throughout this paper. 

\item $\bN$ and $\bZ$ denote the natural number system and the integer number system, respectively.
As usual $\fR^{d}$
stands for the Euclidean space of points $x=(x^{1},...,x^{d})$.
 For $i=1,...,d$, multi-indices $\alpha=(\alpha_{1},...,\alpha_{d})$,
$\alpha_{i}\in\{0,1,2,...\}$, and functions $u(x)$ we set
$$
u_{x^{i}}=\frac{\partial u}{\partial x^{i}}=D_{i}u,\quad
D^{\alpha}u=D_{1}^{\alpha_{1}}\cdot...\cdot D^{\alpha_{d}}_{d}u.
$$
%

\item $C^\infty(\fR^d)$ denotes the space of infinitely differentiable functions on $\fR^d$. 
$\cS(\fR^d)$ is the Schwartz space consisting of infinitely differentiable and rapidly decreasing functions on $\fR^d$.
By $C_c^\infty(\fR^d)$,  we denote the subspace of $C^\infty(\fR^d)$ with the compact support.

\item  For $n\in \bN$ and $\cO\subset \fR^d$ and a normed space $F$, 
by $C(\cO;F)$,  we denote the space of all $F$-valued continuous functions $u$ on $\cO$ having $|u|_{C}:=\sup_{x\in O}|u(x)|_F<\infty$.

\item For $p \in [1,\infty)$, a normed space $F$,
and a  measure space $(X,\mathcal{M},\mu)$, 
by $L_{p}(X,\cM,\mu;F)$,
we denote the space of all $F$-valued $\mathcal{M}^{\mu}$-measurable functions
$u$ so that
\[
\left\Vert u\right\Vert _{L_{p}(X,\cM,\mu;F)}:=\left(\int_{X}\left\Vert u(x)\right\Vert _{F}^{p}\mu(dx)\right)^{1/p}<\infty,
\]
where $\mathcal{M}^{\mu}$ denotes the completion of $\cM$ with respect to the measure $\mu$. 
If there is no confusion for the given measure and $\sigma$-algebra, we usually omit them.

\item For  measurable set  $\cO \subset \fR^d$, $|\cO|$ denotes the Lebesgue measure of $\cO$.


\item By $\cF$ and $\cF^{-1}$ we denote the d-dimensional Fourier transform and the inverse Fourier transform, respectively. That is,
$\cF[f](\xi) := \int_{\fR^{d}} e^{-i x \cdot \xi} f(x) dx$ and $\cF^{-1}[f](x) := \frac{1}{(2\pi)^d}\int_{\fR^{d}} e^{ i\xi \cdot x} f(\xi) d\xi$.

\item If we write $N=N(a,b,\cdots)$, this means that the
constant $N$ depends only on $a,b,\cdots$. 
\end{itemize}




\mysection{Setting and main results}

Let $T \in (0,\infty)$ be a fixed time and $d \in \bN$ be the space dimension. 
\begin{assumption}
					\label{co as}
(i)  The coefficients $a^{ij}(t)$ $(i,j=1,\ldots,d)$ are measurable and bounded, that is there exists a constant $M>0$ such that 
$$
|a^{ij}(t)|\leq M, \quad \forall   \, t>0, i,j.
$$

(ii) There exists a nonnegative measurable function $\delta(t)$ such that 
 $$
0\leq \delta(t)|\xi|^2 \leq a^{ij}(t) \xi^i \xi^j, \quad \forall  (t,\xi) \in  (0,\infty) \times \fR^d.
 $$
\end{assumption}

\begin{remark}
\begin{enumerate}[(i)]
\item Since we may assume that $(a^{ij}(t))$ is symmetric, we can take $\delta(t)$ as the smallest eigenvalue of  $(a^{ij}(t))$. If $d=1$ we take $\delta(t)=a^{11}(t)$.

\item $\delta(t)$ is bounded due to Assumption \ref{co as}.
\end{enumerate}
\end{remark}

For $p \in  (1,\infty)$ and  $n=0,1,2,\cdots$, denote
$$
H^n_p=H^n_p(\fR^d)=\{u: D^{\alpha}u\in L_p(\fR^d), |\alpha|\leq n\},
$$
and in general for $n\in \fR$,  we write $u\in H^n_p$ iff
$$
\|u\|_{H^n_p}:=\|(1-\Delta)^{n/2}u\|_{L_p}:= \left\|\cF^{-1}[(1+|\xi|^2)^{n/2}\cF(u)(\xi)] \right\|_{L_p} <\infty.
$$
%
Define
$$
\bH^n_p(T)=L_p([0,T],dt;H^n_p)
$$
and for  $m  \in \fR \setminus 0$,
$$
\bH^{n}_p(T,\delta^m):=L_p( [0,T], \delta^m(t)dt ; H^{n}_p),
$$
i.e. 
\begin{align*}
f \in \bH^{n}_p(T,\delta^m)
\quad \Leftrightarrow \quad
\|f\|_{\bH^{n}_p(T,\delta^m)}:=  \left(\int_0^T \|f(t,\cdot)\|^p_{H^{n}_p} \delta^m(t)dt  \right)^{1/p}< \infty. 
\end{align*}
Simply we put
$$
\bL_p(T):=\bH^{0}_p(T), \quad \bL_p(T,\delta^m)=\bH^0_p(T,\delta^m).
$$
\begin{remark}
					\label{rmk sp}
(i) Let $u\in \bH^n_p(T,\delta^m)$ for some $m>0$. Then possibly $u(t,\cdot)\not\in H^n_p$ on the set $\{t: \delta(t)=0\}$.

(ii) Since $\delta(t)$ is bounded,  for any $m_1  \geq m_2$,
$$
\bH_p^\gamma(T,\delta^{m_2}) \subset \bH_p^\gamma(T, \delta^{m_1}),
$$
and in particular
$$
\bH_p^2(T) \subset \bH_p^2(T,\delta) \quad \text{and} \quad \bL_p(T,\delta^{1-p}) \subset \bL_p(T) .
$$
\end{remark}

\begin{definition}
						\label{sol def}
Let $u\in \bL_p(T)$. We say that $u$ is a solution to equation (\ref{main eqn})  iff    for any
 $\varphi \in C_c^\infty(\fR^d)$ the equality
\begin{align}
					\label{def sol}
&\left( u(t,\cdot), \varphi \right) = \left( u_0, \varphi \right)+\int_0^t \left( a^{ij}(s)u(s,\cdot),\varphi_{x^ix^j} \right) ds + \int_0^t \left(f(s,\cdot), \varphi \right) ds
\end{align}
holds for all $t\leq T$.
\end{definition}

Here is a  classical result for the fully degenerate parabolic equation. See e.g. \cite[Theorem 3.1]{krylov1986characteristics}, where it is assumed that $p \in [2,\infty)$ to handle stochastic parabolic equations. But for the deterministic case the proof goes through for all $p \in (1,\infty)$.

\begin{theorem}
     \label{classic}
Let $p>1$, $T<\infty$ and Assumption \ref{co as} hold.  Then for any $f\in \bL_p(T)$ and $u_0\in L_p$, equation (\ref{main eqn})
has a unique solution $u\in C\left( [0,T] ; L_p \right)$, and for this solution 
\begin{equation}
   \label{eqn classic}
\|u\|_{C([0,T]; L_p)}\leq N(p,T,d) \left(\|f\|_{\bL_p(T)}+\|u_0\|_{L_p} \right).
\end{equation}
Furthermore, if $f\in \bH^n_p(T)$ and $u_0\in H^n_p$ for some $n \in \fR$, then $u\in C([0,T]; H^n_p)$.
\end{theorem}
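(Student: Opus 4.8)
Since the coefficients depend only on $t$, the plan is to Fourier transform in the space variable and turn (\ref{main eqn}) into a family of scalar linear ODEs indexed by the frequency $\xi$. Applying $\cF$ formally gives
\begin{equation*}
\partial_t \hat u(t,\xi)=-a^{ij}(t)\xi^i\xi^j\,\hat u(t,\xi)+\hat f(t,\xi),\qquad \hat u(0,\xi)=\hat u_0(\xi),
\end{equation*}
hence
\begin{equation*}
\hat u(t,\xi)=m(t,0,\xi)\,\hat u_0(\xi)+\int_0^t m(t,s,\xi)\,\hat f(s,\xi)\,ds,\qquad m(t,s,\xi):=\exp\Big(-\xi^{\top}\Big(\int_s^t A(r)\,dr\Big)\xi\Big),
\end{equation*}
where $A(r)=(a^{ij}(r))$. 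By Assumption \ref{co as}(ii) the matrix $\int_s^t A(r)\,dr$ is symmetric and nonnegative, so $m(t,s,\xi)$ is exactly the Fourier transform of a Gaussian probability measure $\mu_{t,s}$ on $\fR^d$ (possibly degenerate, and equal to the Dirac mass at the origin when the matrix vanishes). I would therefore take as candidate solution
\begin{equation*}
u(t,\cdot)=\mu_{t,0}\ast u_0+\int_0^t \mu_{t,s}\ast f(s,\cdot)\,ds .
\end{equation*}

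From this representation the a priori bound is essentially free: convolution with a probability measure is a contraction on $L_p$ by Young's inequality, so Minkowski's integral inequality followed by H\"older's inequality in $t$ gives
\begin{equation*}
\|u(t,\cdot)\|_{L_p}\le \|u_0\|_{L_p}+\int_0^t\|f(s,\cdot)\|_{L_p}\,ds\le \|u_0\|_{L_p}+T^{1-1/p}\,\|f\|_{\bL_p(T)},
\end{equation*}
which is (\ref{eqn classic}). For existence I would verify that this $u$ satisfies (\ref{def sol}): a routine Fubini/Fourier computation, done first for $u_0,f\in\cS(\fR^d)$ where the ODE solution is classical, and then extended to general data by the contraction estimate applied to differences together with density of $\cS(\fR^d)$ in $L_p$ and $\bL_p(T)$.

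Continuity in $t$ I would get from the fact that $t\mapsto\int_s^t A(r)\,dr$ is Lipschitz, so $\mu_{t,s}$ varies weakly continuously in $t$; for smooth compactly supported data this yields $u\in C([0,T];L_p)$ by dominated convergence, and the general case follows once more from the contraction estimate and density. For the higher-regularity claim, the key observation is that $(1-\Delta)^{n/2}=\cF^{-1}(1+|\xi|^2)^{n/2}\cF$ and $g\mapsto\mu_{t,s}\ast g$ are both Fourier multipliers, hence commute; thus $(1-\Delta)^{n/2}u$ is the solution associated with data $(1-\Delta)^{n/2}u_0\in L_p$ and $(1-\Delta)^{n/2}f\in\bL_p(T)$, and the $n=0$ result yields $u\in C([0,T];H^n_p)$.

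The step I expect to require the most care is uniqueness, since a solution is only assumed to lie in $\bL_p(T)$ and is a priori not differentiable. My plan is a duality argument: given a solution $u$ with $f=0$ and $u_0=0$, fix $s\in(0,T]$ and $g\in C_c^\infty(\fR^d)$, build (exactly as above, with time reversed) the solution $v$ of the backward equation $-\partial_t v=a^{ij}(t)v_{x^ix^j}$ on $[0,s]$ with $v(s)=g$, mollify $u$ in $x$, and show that $t\mapsto(u(t,\cdot),v(t,\cdot))$ is absolutely continuous with zero derivative on $[0,s]$; sending the mollification parameter to $0$ gives $(u(s,\cdot),g)=(u(0,\cdot),v(0,\cdot))=0$, and since $g$ is arbitrary, $u(s,\cdot)=0$. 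An alternative is to insert into (\ref{def sol}) test functions approximating $e^{-i\xi\cdot x}$ (a Schwartz cutoff of the exponential, the error controlled by $\sup_t\|u(t,\cdot)\|_{L_p}<\infty$) and recover the ODE for $\hat u(\cdot,\xi)$ directly, forcing $\hat u\equiv 0$. Either way, the only genuine obstacle is the bookkeeping needed to make the pairing $t\mapsto(u,v)$ rigorously absolutely continuous with the claimed derivative.
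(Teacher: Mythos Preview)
The paper does not actually prove Theorem~\ref{classic}; it is quoted as a classical result with a reference to Krylov--Rozovskii (and the remark that the restriction $p\ge 2$ there is only needed for the stochastic case). So there is no ``paper's own proof'' to compare against.

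Your sketch is a correct self-contained argument, and in fact it anticipates exactly the representations the paper relies on later: the Gaussian/probabilistic formula $u(t,x)=\bE[u_0(x+X_t)]+\int_0^t\bE[f(s,x+X_t-X_s)]\,ds$ appears in the proof of Lemma~\ref{lemma classic} (your $\mu_{t,s}$ is the law of $X_t-X_s$), and the Fourier-side formula with multiplier $m(t,0,\xi)=\exp(-\int_0^t a^{ij}(r)\,dr\,\xi^i\xi^j)$ is precisely (\ref{0505 e 1}) in Section~\ref{pf main thm 2}. The contraction estimate, the density argument for existence and continuity, and the commutation with $(1-\Delta)^{n/2}$ for the $H^n_p$ statement are all standard and correct. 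Your uniqueness plan via duality is also fine; note that Definition~\ref{sol def} requires the identity (\ref{def sol}) to hold for \emph{all} $t\le T$, so a solution automatically has $t\mapsto (u(t,\cdot),\varphi)$ absolutely continuous for each test function, which is what makes the pairing $t\mapsto (u(t),v(t))$ tractable after mollification in $x$.
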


\begin{remark}
Note that in the above result, the regularity (or differentiability) of the solution is not better than that of $f$ and $u_0$.
\end{remark}

To state our regularity condition for the initial data, we introduce the Besov space characterized by the Littlewood-Paley operator. See \cite[Chapter 6]{bergh1976interpolation} or 
 \cite[Chapter 6]{grafakos2009modern} for more details.  Let $\Psi$ be a 
  nonnegative function on $\fR^d$ so that $\hat \Psi \in C_c^\infty \left( B_{2}(0)  \setminus B_{1/2}(0)\right)$
and
\begin{align}
                    \label{sum 1}
 \quad \sum_{j \in \bZ} \hat \Psi (2^{-j} \xi) = 1, \quad \forall \xi \in \fR^d,
\end{align}
where $B_r(0) := \{ x \in \fR^d : |x| \leq r\}$ and $\hat \Psi$ is the Fourier transform of $\Psi$. 
For a tempered distribution $u$, we define
\begin{align}
				\label{del j}
\Delta_j u(x):=\Delta_j^\Psi u(x):= \cF^{-1} \left[ \hat \Psi (2^{-j} \xi) \cF u (\xi) \right] (x)
\end{align}
and
$$
S_0(u)(x) = \sum_{j=-\infty}^0 \Delta_j u (x),
$$
where the convergence is understood in the sense of distributions.
Due to \eqref{sum 1},
\begin{align}
                    \label{little iden}
u(x)= S_0(u)(x) + \sum_{j=1}^\infty \Delta_j u(x).
\end{align}
The Besov space $B^s_p$ with the order $s$ and the exponent $p$  is  the space of all tempered distributions $u$ such that
\begin{equation}
    \label{Besov}
\|u\|_{B_p^s}:=\| S_0(u) \|_{L_p} + \left(\sum_{j=1}^\infty 2^{spj} \| \Delta_j u \|_{L_p}^p \right)^{1/p} < \infty.
\end{equation}

Now we introduce our first result to equation (\ref{main eqn}).  We control an $L_p$-norm of $u_{xx}$  with the help of the weight $\delta(t)dt$. 
\begin{theorem}
					\label{main thm 1}
Let $p>1$, $n\in \fR$,  and Assumption \ref{co as} hold. Then for all $u_0 \in B_p^{n+2-2/p}$
and $f \in \bH^n_p \left(T, \delta^{1-p} \right)$, there exists a unique solution 
$u \in C\left( [0,T] ; H^n_p \right) \cap \bH^{n+2}_p(T,\delta)$ to the  problem
 \begin{align}
					\label{eqn 2.16.1}
u_t=a^{ij}(t)u_{x^ix^j}+f, \quad t \in (0,T)\, ; \quad u(0,\cdot)=u_0.
\end{align}
Furthermore, for this solution  we have
\begin{equation}
                             \label{main-zero}
 \|u_{xx}\|_{\bH^n_p(T,\delta)}
\leq N(d,p) \left(\|f\|_{\bH^n_p(T, \delta^{1-p})} 
+\|u_0\|_{B_p^{n+2-2/p}}\right).
 \end{equation}
\end{theorem}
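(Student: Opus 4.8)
The plan is to establish the a priori bound (\ref{main-zero}) first: existence and uniqueness of a solution $u\in C([0,T];H^n_p)$ are already provided by Theorem~\ref{classic} (note $f\in\bH^n_p(T,\delta^{1-p})\subset\bH^n_p(T)$ and $B^{n+2-2/p}_p\hookrightarrow H^n_p$), and once (\ref{main-zero}) is known this same $u$ automatically lies in $\bH^{n+2}_p(T,\delta)$. Since the coefficients are $x$-independent, $(1-\Delta)^{n/2}$ commutes with (\ref{eqn 2.16.1}) and acts isomorphically between the relevant spaces, so I may take $n=0$; by linearity it suffices to bound $u_{xx}$ (i.e.\ each $u_{x^kx^l}$) separately when $f=0$ (by $\|u_0\|_{B^{2-2/p}_p}$) and when $u_0=0$ (by $\|f\|_{\bL_p(T,\delta^{1-p})}$); and by density I may assume $u_0,f$ smooth and rapidly decaying. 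I would first prove the estimate under the extra hypothesis $\delta(t)\ge\varepsilon>0$, with a constant $N(d,p)$ \emph{independent of} $\varepsilon$, and then recover the general case by replacing $a^{ij}(t)\partial_i\partial_j$ with $a^{ij}(t)\partial_i\partial_j+\varepsilon\Delta$ — which only changes $\delta$ to $\delta+\varepsilon\ge\delta$ and, since $1-p<0$, does not increase $\|f\|_{\bL_p(T,\delta^{1-p})}$ — and letting $\varepsilon\downarrow0$, using Theorem~\ref{classic} for the convergence $u^\varepsilon\to u$, weak compactness of $\{u^\varepsilon_{xx}\}$ in the reflexive space $\bL_p(T,\delta)$, and weak lower semicontinuity of the norm.

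For the estimate when $\delta\ge\varepsilon$, I would Fourier transform in $x$, reducing (\ref{eqn 2.16.1}) to the scalar ODE $\widehat u_t=-A(t,\xi)\widehat u+\widehat f$ with $A(t,\xi)=a^{ij}(t)\xi^i\xi^j\ge0$, hence
\[
\widehat u(t,\xi)=e^{-\Phi(t,\xi)}\widehat u_0(\xi)+\int_0^t e^{-(\Phi(t,\xi)-\Phi(r,\xi))}\widehat f(r,\xi)\,dr,\qquad \Phi(t,\xi)=\Big(\int_0^t a^{ij}(\sigma)\,d\sigma\Big)\xi^i\xi^j .
\]
Because $\delta\ge\varepsilon$, $\bar\delta(t):=\int_0^t\delta(\sigma)\,d\sigma$ is a bi-Lipschitz increasing bijection of $[0,T]$ onto $[0,S]$, $S:=\bar\delta(T)$, and the time substitution $s=\bar\delta(t)$, $v(s,\cdot)=u(t(s),\cdot)$, $\widetilde f(s,\cdot)=\delta(t(s))^{-1}f(t(s),\cdot)$ removes both weights: a direct computation gives $\|u_{xx}\|_{\bL_p(T,\delta)}=\|v_{xx}\|_{\bL_p(S)}$ and $\|f\|_{\bL_p(T,\delta^{1-p})}=\|\widetilde f\|_{\bL_p(S)}$. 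In the new variables
\[
v_{xx}(s)=\cF^{-1}\big[-\xi^k\xi^l e^{-C_s\xi\cdot\xi}\,\widehat v_0(\xi)\big]-\int_0^s\cF^{-1}\big[\xi^k\xi^l e^{-C_{s,\rho}\xi\cdot\xi}\,\widehat{\widetilde f}(\rho,\xi)\big]\,d\rho ,
\]
where $C_s$ and $C_{s,\rho}$ are the symmetric matrices with entries $\int_0^{t(s)}a^{ij}$ and $\int_{t(\rho)}^{t(s)}a^{ij}$; and — the decisive point — Assumption~\ref{co as}(ii) together with $\bar\delta(t(s))=s$ gives $C_s\ge sI$ and $C_{s,\rho}\ge(s-\rho)I$.

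The heart of the argument is a heat-kernel domination. Write $C_s=sI+E_s$, $C_{s,\rho}=(s-\rho)I+D_{s,\rho}$ with $E_s,D_{s,\rho}\ge0$, and let $G_C$ be the (possibly degenerate) Gaussian probability measure with $\widehat{G_C}(\xi)=e^{-C\xi\cdot\xi}$; then $G_{C_s}=G_{sI}*G_{E_s}$ and $G_{C_{s,\rho}}=G_{(s-\rho)I}*G_{D_{s,\rho}}$. For the $f=0$ part this gives $v_{xx}(s)=G_{E_s}*(\partial_{x^k}\partial_{x^l}e^{s\Delta}v_0)$, so, convolution with a probability measure being an $L_p$-contraction,
\[
\int_0^S\|v_{xx}(s)\|_{L_p}^p\,ds\ \le\ \int_0^\infty\big\|\partial_{x^k}\partial_{x^l}e^{s\Delta}v_0\big\|_{L_p}^p\,ds\ \le\ N(d,p)\,\|v_0\|_{B^{2-2/p}_p}^p
\]
by the classical heat-semigroup characterization of $B^{2-2/p}_p$. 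For the $u_0=0$ part, the same factorization shows the convolution kernel of $\Delta_j$ applied to the $\rho$-integrand is $\big(\phi_j*\partial_{x^k}\partial_{x^l}G_{(s-\rho)I}\big)*G_{D_{s,\rho}}$ ($\phi_j$ the kernel of $\Delta_j$), whose $L_1$-norm is at most that of the heat factor, hence $\le N_d\,2^{2j}e^{-c(s-\rho)2^{2j}}$, a bound free of $D_{s,\rho}$ and hence of $\varepsilon$. Feeding this frequency-localized bound into Young's inequality in $\rho$ (the kernel $N_d2^{2j}e^{-c\,\cdot\,2^{2j}}$ has $L_1$-norm $N_d/c$, uniformly in $j$) and summing in $j$ via the Littlewood--Paley characterization of $\bL_p$ and the Fefferman--Stein vector-valued maximal inequality (the case $1<p<2$ following from the case $p>2$ by duality, since the adjoint operator is of the same type) gives $\|v_{xx}\|_{\bL_p(S)}\le N(d,p)\|\widetilde f\|_{\bL_p(S)}$. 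Undoing the time change yields (\ref{main-zero}) with $N(d,p)$ independent of $\varepsilon$, and the limiting argument of the first paragraph finishes the proof.

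The step I expect to be the main obstacle is the $\varepsilon$-uniformity of the constant. After the time change the equation is $v_s=(a^{ij}(t(s))/\delta(t(s)))\,v_{x^ix^j}+\widetilde f$: uniformly elliptic with ellipticity constant $1$, but with \emph{upper} bound blowing up like $M/\varepsilon$. Hence the classical maximal regularity theorem (whose constant depends on that upper bound), the naive Mihlin estimates for the symbols above, and a Calder\'on--Zygmund argument (whose pointwise size estimate fails, because the Gaussians $G_{C_{s,\rho}}$ become arbitrarily anisotropic as $\varepsilon\to0$) all fail to give an $\varepsilon$-free constant. The resolution is precisely the splitting $C_{s,\rho}=(s-\rho)I+D_{s,\rho}$, $D_{s,\rho}\ge0$: all the anisotropy and size of the effective diffusion matrix is absorbed into the harmless probability measure $G_{D_{s,\rho}}$, leaving only scale-invariant heat-kernel bounds, which do not see $\varepsilon$ at all.
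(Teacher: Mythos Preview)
Your overall architecture---reduce to $n=0$, perform the time change $s=\int_0^t\delta$, split the Gaussian kernel $G_{C_{s,\rho}}$ as the isotropic heat kernel $G_{(s-\rho)I}$ convolved with the probability measure $G_{D_{s,\rho}}$, and then let $\varepsilon\downarrow 0$---coincides with the paper's proof (Lemma~\ref{lemma classic} combined with Lemma~\ref{opt est}). Your treatment of the initial-data piece ($f=0$) is correct and is exactly the analytic transcription of the paper's probabilistic decomposition $X_t\stackrel{d}{=}Y_t+Z_t$.

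The gap is in the $u_0=0$ piece. From the kernel factorization and Young's inequality in $\rho$ you obtain the uniform-in-$j$ bound
\[
\|\Delta_j v_{xx}\|_{\bL_p(S)}\le N\,\|\tilde\Delta_j\tilde f\|_{\bL_p(S)},
\]
but this does \emph{not} give $\|v_{xx}\|_{\bL_p(S)}\le N'\|\tilde f\|_{\bL_p(S)}$. Summing via Littlewood--Paley and Fefferman--Stein would require a \emph{pointwise} domination of $\Delta_j v_{xx}$ by a maximal function of $\tilde\Delta_j\tilde f$, not merely an $L_p$-norm inequality; and that pointwise bound fails precisely because convolution with the anisotropic Gaussian $G_{D_{s,\rho}}$ is an $L_p$-contraction but is \emph{not} controlled by the Hardy--Littlewood maximal operator when the eigenvalues of $D_{s,\rho}$ blow up. (Shell-wise uniform $L_p$ boundedness is strictly weaker than $L_p$ boundedness; what is missing is an $R$-boundedness statement, which your sketch does not supply.) The paper's remedy is a conditioning step you did not carry out for this term: since $D_{s,\rho}=D_s-D_\rho$ is additive, $G_{D_{s,\rho}}$ is the law of $Z_s-Z_\rho$ for a Gaussian process $Z$ with independent increments; freezing a path $Z(\omega')$, the substitution $\tilde f(\rho,\cdot)\mapsto\tilde f(\rho,\cdot-Z_\rho(\omega'))$ together with the outer translation by $Z_s(\omega')$ are both $L_p$-isometries in $x$, and what remains inside $\bE'$ is literally the heat-equation Duhamel operator applied to a function with the same $\bL_p$-norm as $\tilde f$. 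Averaging in $\omega'$ via Jensen then gives the desired $N(d,p)$ bound. Equivalently, after your time change you could simply invoke the known fact (Krylov \cite{Krylov1994}, restated here as Lemma~\ref{lemma classic} with $\varepsilon=1$) that for $(\tilde a^{ij}(s))\ge I$ the parabolic maximal-regularity constant depends only on $d,p$ and not on the upper bound of the coefficients; but your Littlewood--Paley sketch does not constitute an independent proof of that fact.
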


The proof of Theorem \ref{main thm 1} is given in Section \ref{pf main thm 1} (see Lemma \ref{opt est}).

\begin{remark}
(i) By (\ref{main-zero}), $u_{xx}(t,\cdot)\in L_p$ on the set $\{t: \delta(t)>0\}$.

(ii) We do not assume $\int_0^T \delta(s) ds > 0$ in Theorem \ref{main thm 1}.
In other words,  the equation can be completely  degenerate on $(0,T)$. 
However, if $\delta(t)=0$ for almost every $t \in (0,T)$, then (\ref{main-zero}) becomes trivial. 
\end{remark}

In the following remark we consider the general case having coefficients depending on $(t,x)$ and uniformly continuous in $x$.

\begin{remark}
   \label{variable x}
  For simplicity assume $u_0=0$. Let $(a^{ij})=(a^{ij}(t,x))\geq \delta(t)I_{d\times d}$.  Then, using (\ref{main-zero})  and a standard freezing  coefficients argument one can easily prove  that there exists $\varepsilon_0>0$ such that (\ref{main-zero}) holds with $n=0$ if $|a^{ij}(t,x)-a^{ij}(t,y)|\leq \varepsilon_0 \delta(t)$ for all $x,y$ in the support of $u$. 
  
Now assume  that  $|a^{ij}(t,x)|\leq N\delta(t)$ and $a^{ij}(t,x)$ are uniformly continuous in $x$ in  the sense that  for any  $\varepsilon>0$, there exists $\kappa>0$, independent of $t$, such that  
   \begin{equation}
      \label{eqn 1}
 |x-y|<\kappa \quad \Rightarrow \quad   |a^{ij}(t,x)-a^{ij}(t,y)|\leq \varepsilon \delta(t).
   \end{equation}
   Note that (\ref{eqn 1}) holds if $a^{ij}(t,x)=a^{ij}(t)\eta(x)$, where $\eta(x)$ is nonnegative bounded uniformly continuous function.

   Next we onsider an appropriate partition of unity $\{\eta^n: n\geq 1\}$ of $\fR^d$ such that each $\eta^n$  has a support in a ball of radius $\kappa_0$ which corresponds to $\varepsilon_0$ in (\ref{eqn 1}). Note that for each $n$,
   $$
   (u\eta^n)_t=a^{ij}(u\eta^n)_{x^ix^j}-2a^{ij}u_{x^i}\eta^n_{x^j}-a^{ij}u\eta^n_{x^ix^j}+f\eta^n, \quad t>0.
   $$
     Thus, by the choice of $\kappa_0$, for each $t\leq T$,
     \begin{equation}
      \label{eqn 6.21}
     \|(u\eta^n)_{xx}\|^p_{\bL_p(t,\delta)}\leq N \|-2a^{ij}u_{x^i}\eta^n_{x^j}-a^{ij}u\eta^n_{x^ix^j}+f\eta^n\|^p_{\bL_p(t,\delta^{1-p})}.
     \end{equation}
Obviously,
     $$
     \|\eta^n u_{xx}\|^p_{\bL_p(t,\delta)}\leq N\left( \|(u\eta^n)_{xx}\|^p_{\bL_p(t,\delta)}+\|u_x \eta^n_x\|^p_{\bL_p(t,\delta)}+\|u\eta^n_{xx}\|^p_{\bL_p(t,\delta)}\right).
     $$
Thus,  summing up these estimates with respect to $n$ and using (\ref{eqn 6.21}) we get
       \begin{align}
							\notag
\|u_{xx}\|^p_{\bL_p(t,\delta)}
&\leq  N\|u_x\|^p_{\bL_p(t, \delta)}+N\|u\|^p_{\bL_p(t,\delta)}
  +N\|a^{ij}u_{x^i}\|^p_{\bL_p(t,\delta^{1-p})}\\
							\label{2018061311}
  &+N\|a^{ij}u\|^p_{\bL_p(t,\delta^{1-p})}+N\|f\|^p_{\bL_p(T,\delta^{1-p})}.
  \end{align}
  This and the assumption $|a^{ij}(t,x)|\leq N\delta(t)$ certainly lead to
  $$
  \|u_{xx}\|^p_{\bL_p(t,\delta)}\leq  N\|u_x\|^p_{\bL_p(t, \delta)}+N\|u\|^p_{\bL_p(t,\delta)}
+N\|f\|^p_{\bL_p(T,\delta^{1-p})}.
  $$
 Furthermore, using inequalities 
\begin{align}
						\label{2018061501}
\|u_x(s,\cdot)\|_{L_p} \leq \epsilon
 \|u_{xx}(s,\cdot)\|_{L_p} +\epsilon^{-1} \|u(s,\cdot)\|_{L_p}, \quad \|u\|_{\bL_p(t,\delta)}\leq N\|u\|_{\bL_p(t)},
\end{align}
and taking $\epsilon$ sufficiently small, we get for each $t\leq T$  \begin{align}
  \label{2018061320}
  \|u_{xx}\|^p_{\bL_p(t,\delta)}\leq  N \left(\|u\|^p_{\bL_p(t)} +\|f\|^p_{\bL_p(T,\delta^{1-p})}\right).
  \end{align}
Moreover, since  $u$ is a solution to \eqref{eqn 2.16.1} and $u_0=0$, for all $s \leq t\leq T$, we get 
\begin{align}
						\notag
\|u(s,\cdot)\|^p_{L_p} 
&= \left\| \int_0^s u_r(r,\cdot) dr \right\|^p_{L_p} = \left\| \int_0^s (a^{ij}u_{x^ix^j}+f)(r,\cdot) dr \right\|^p_{L_p} \\
						\notag
&\leq N \int_0^s  \left( \delta^p(r)\|u_{xx}(r,\cdot)\|^p_{L_p}  + \|f(r,\cdot) \|^p_{L_p}\right) dr \\
						\label{2018061301}
&\leq N\int_0^s  \left( \delta(r)\|u_{xx}(r,\cdot)\|^p_{L_p}  + \|f(r,\cdot) \|^p_{L_p}\right) dr ,
\end{align}
where the last inequality is due to the assumption that $\delta(t)$ is bounded.
Inequality (\ref{2018061301}) and  integration  on $[0,t]$ give 
\begin{align}
							\label{2018061321}
\|u\|^p_{\bL_p(t)} \leq  N(T)  \left( \int_0^t \|u_{xx}\|^p_{\bL_p(s,\delta)} ds + \|f\|^p_{\bL_p(T,\delta^{1-p})}\right), \quad \forall \, t\leq T.
\end{align}
From \eqref{2018061320} and \eqref{2018061321},  we get, for any $t\leq T$,
 \begin{align}
								\notag
\|u_{xx}\|^p_{\bL_p(t,\delta)} &\leq N \|u\|^p_{\bL_p(t)}+N\|f\|^p_{\bL_p(T,\delta^{1-p})}\\
								\label{2018061523}
  &\leq N \int^t_0 \|u_{xx}\|_{\bL_p(s,\delta)}^p ds+ N\|f\|^p_{\bL_p(T,\delta^{1-p})}.
  \end{align}
This and Gronwall's inequality lead to (\ref{main-zero}) with $n=0$.

  Finally,  we  explain that  one can slightly weaken the condition $|a^{ij}(t,x)|\leq N \delta(t)$ by 
  \begin{equation}
    \label{eqn 6.21.5}
  |a^{ij}(t,x)|\leq N  \left(\delta(t) \right)^{\frac{2p-1}{2p}},  \quad p \geq \frac{3}{2}.
  \end{equation}         
Note that  from \eqref{2018061501}  we can obtain
   \begin{align}
	\label{2018061510}						
\|u_x\|_{\bL_p(t, \sqrt{\delta})} 
\leq  \frac{1}{\epsilon}\|u\|_{\bL_p(t)} + \epsilon \|u_{xx}\|_{\bL_p(t,\delta) },
\end{align}
where $\epsilon$ is a arbitrary positive constant. 
From \eqref{2018061311} and (\ref{eqn 6.21.5}),
  \begin{align}
							\notag
\|u_{xx}\|_{\bL_p(t,\delta)}
&\leq  N\|u_x\|_{\bL_p(t, \delta)}+N\|u\|_{\bL_p(t,\delta)}
  +N\|a^{ij}u_{x^i}\|_{\bL_p(t,\delta^{1-p})}\\
							\notag
  &\quad +\|a^{ij}u\|_{\bL_p(t,\delta^{1-p})}+\|f\|_{\bL_p(T,\delta^{1-p})}  \\
						\notag
&\leq  N\|u_x\|_{\bL_p(t, \sqrt{\delta})}+N\|u\|_{\bL_p(t)} +N\|u_{x}\|_{\bL_p(t, \sqrt{\delta})}+\|u\|_{\bL_p(t,\sqrt{\delta})} \\
								\notag
  &\quad +\|f\|_{\bL_p(T,\delta^{1-p})}  \\
									\notag
  &\leq  N\|u_x\|_{\bL_p(t, \sqrt{\delta})}+N\|u\|_{\bL_p(t)}+\|f\|_{\bL_p(T,\delta^{1-p})}\\
  &\leq \frac{N}{\epsilon}\|u\|_{\bL_p(t)} + N \epsilon \|u_{xx}\|_{\bL_p(t,\delta)}+N\|u\|_{\bL_p(t)}+\|f\|_{\bL_p(T,\delta^{1-p})} .
  \end{align}
  Taking $\epsilon$ sufficiently small, we get  (\ref{2018061320}) again.  
Moreover, following \eqref{2018061301}, we have
\begin{align}
						\notag
\|u(s,\cdot)\|^p_{L_p} 
&= \left\| \int_0^s u_r(r,\cdot) dr \right\|^p_{L_p}  \\
						\notag
&\leq N\int_0^s  \left( \delta^{\frac{2p-1}{2}}(r)\|u_{xx}(r,\cdot)\|^p_{L_p}  + \|f(r,\cdot) \|^p_{L_p}\right) dr \\
						\label{2018061530}
&\leq  N \int_0^s  \left( \delta(r)\|u_{xx}(r,\cdot)\|^p_{L_p}  + \|f(r,\cdot) \|^p_{L_p}\right) dr ,
\end{align}
Therefore \eqref{2018061523} is obtained again.
   \end{remark}

\vspace{2mm}

In Assumption \ref{ini as} below we assume certain asymptotic behavior of $\delta(t)$ near $t=0$ 
to obtain
\begin{equation}
      \label{eqn asym}
\|u\|_{\bL_p(T)}+\|u_{xx}\|_{\bL_p(T)}\leq C \|u_0\|_{B^s_p},
\end{equation}
where $s<2$ and $u$ is the solution to equation (\ref{main eqn}) with $f=0$. Obviously, (\ref{eqn asym}) is impossible if $\delta(t)$ completely vanishes near $t=0$.  Indeed, if $a^{ij}(t)\equiv 0$ near $t=0$ then we get $u=u_0$ near $t=0$.

\begin{assumption}
					\label{ini as}
\begin{enumerate}[(i)]
\item  $\delta(t)$ does not completely vanish near $t=0$. In other words,  $\int_0^t \delta(s) ds >0$ for all $t>0$.

\item
There exist $t_0 \in (0,T)$,  $\beta>0$, and $N_0>0$ such that for all $h >0$, 
\begin{align}
					\label{0427 e 1}
\left| \left\{ t \in [0,t_0] : h \leq \int_0^t \delta (s) ~ds <4h \right\} \right| \leq N_0 h^{1/\beta}.
\end{align}
\item There exists a constant $  \bar N_0>0$ such that
\begin{equation}
    \label{1d}
     |a^{ij}(t)| \leq \bar N_0 \delta(t) \qquad \forall t>0. 
\end{equation}

\end{enumerate}
\end{assumption}
\begin{remark}
\begin{enumerate}[(i)]

\item 
If $d=1$ then we can take $\delta(t)=a^{11}(t)$, and thus (\ref{1d}) holds with $\bar{N}_0=1$.

\item
If \eqref{0427 e 1}  holds with some $t_0>0$, then it also holds for any $t'_0<t_0$ and therefore we may assume that $t_0$ is very small. 
In particular, we put $t_0 <1 \wedge T$. 

\item
Since $t_0$ can be taken very small and $\int_0^t \delta(s)ds$ goes to zero as $t \to 0$, 
it is sufficient that $(\ref{0427 e 1})$ holds only for all sufficiently small $h>0$. 


\item Obviously, if  $\delta(t)>c>0$ near $t=0$ then  \eqref{0427 e 1}   holds with $\beta=1$.
\end{enumerate}
\end{remark}
\bigskip

Here are two examples related to   Assumption \ref{ini as}.



\begin{example}[Functions with weak scaling property]
						\label{ex 3}
Assume that there exist  constants $\alpha >-1$, $t_0>0$, and  $N>0$ such that
\begin{align}
						\label{weak sca}
N t^{\alpha} \leq \delta(t) \qquad \forall t \in (0,t_0).
\end{align}
Then  Assumption \ref{ini as}(ii) holds with $\beta=\alpha+1$. Indeed,
\begin{align*}
\left| \left\{ t \in [0,t_0] : h \leq \int_0^t \delta (s) ~ds <4h \right\} \right| 
&\leq \left| \left\{ t \in [0,t_0] : \frac{ N}{\alpha+1} t^{\alpha+1}  <4h \right\} \right|  \\
&\leq N_0(N,\alpha)  h^{1/(\alpha+1)}.
\end{align*}
(\ref{weak sca}) holds if $\delta(t) (\geq 0)$ is a polynomial or analytic near zero.  Here are other (Bernstein)  functions satisfying (\ref{weak sca}):  
\begin{itemize}
\item[(1)] $\delta(t)=\sum_{i=1}^n t^{\alpha_i}$, $0<\alpha_i  <1$;
\item[(2)] $\delta(t)=(t+t^\alpha)^\beta$, $\alpha, \beta\in (0, 1)$;
\item[(3)] $\delta(t)=t^\alpha(\log(1+t))^\beta$, $\alpha\in (0, 1)$,
$\beta\in (0, 1-\alpha)$;
\item[(4)] $\delta(t)=t^\alpha(\log(1+t))^{-\beta}$, $\alpha\in (0, 1)$,
$\beta\in (0, \alpha)$;
\item[(5)] $\delta(t)=(\log(\cosh(\sqrt{t})))^\alpha$, $\alpha\in (0, 1)$;
\item[(6)] $\delta(t)=(\log(\sinh(\sqrt{t}))-\log\sqrt{t})^\alpha$, $\alpha\in (0, 1)$.
\end{itemize}
\end{example}

\begin{example}[Oscillatory functions] 
						\label{ex 4}
Assume that
there exist  constants $\beta_0 > 0$, $t_0  > 0$, and $N>0$ such that
\begin{align}
						\label{osc con}
N t^{\beta_0} \leq  \int_0^t\delta(s) ds \qquad \forall t \in (0,t_0).
\end{align}
Then by the argument in the previous example, $\delta(t)$ satisfies  Assumption \ref{ini as}(ii) with $\beta=\beta_0$. Condition (\ref{osc con}) is a 
generalization of (\ref{weak sca}) and is satisfied by  lots of interesting oscillatory functions.
For example, put 
$$\delta(t) = 1+ \sin (1/t).$$ 
Note that $\delta(t)$ vanishes infinitely many times near $t=0$,  and surprisingly (\ref{osc con}) holds with $\beta_0=1$. This is because for any small $t>0$, 
$$|A_t|:=|\{s\leq t: \sin(1/s)\geq -1/2\}|\geq t/2,$$
and therefore $\int^t_0 (1+\sin(1/s))ds \geq \int_{A_t} 1/2 ds \geq t/4$.

\end{example}

\begin{theorem}
					\label{main thm 2}
Let $p>1$ and $T<\infty$. 
Suppose that  Assumptions \ref{co as} and \ref{ini as} hold. 
Then, for any
 $u_0 \in B^{2 \left(1-1/(\beta p) \right)}_p$, there exists a unique solution 
$u \in C\left( [0,T] ; L_p \right) \cap \bH^2_p(T)$ to the problem
 $$
u_t=a^{ij}(t)u_{x^ix^j}, \quad t \in (0,T)\, ; \quad u(0,\cdot)=u_0,
$$
and for this solution we have
\begin{align}
					\label{0510 e 1}
\|u_{xx}\|_{\bL_p(T)} \leq 
N \|u_0\|_{B_p^{2 \left(1-1/(\beta p) \right)}},
\end{align}
where  $N$ is a  constant depending only on $d,p,T,N_0, \bar N_0, \beta$, and $ \int_0^{t_0} \delta(s)ds$. 
\end{theorem}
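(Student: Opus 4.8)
The plan is to reduce everything to the weighted estimate of Theorem \ref{main thm 1} via a dyadic (Littlewood--Paley type) decomposition in time, using Assumption \ref{ini as}(ii) to control how much ``time'' is spent in each dyadic band of the clock $\Phi(t):=\int_0^t\delta(s)\,ds$. Since $f=0$ and the equation is linear, I first decompose the initial data: write $u_0=S_0(u_0)+\sum_{k\ge1}\Delta_k u_0$ as in \eqref{little iden}, and let $u^{(k)}$ be the solution with initial value $\Delta_k u_0$ (and $u^{(0)}$ the one with initial value $S_0(u_0)$). By uniqueness in Theorem \ref{classic}, $u=\sum_{k\ge0}u^{(k)}$, and by the triangle inequality in $\bL_p(T)$ it suffices to sum the estimates for each piece. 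The point of splitting the frequency is that for a piece with frequency localized near $2^k$, the relevant diffusion time scale is $2^{-2k}$ in the clock $\Phi$, i.e. the solution ``feels'' the Laplacian only once $\Phi(t)\approx 2^{-2k}$; before that time it is essentially frozen, and after it decays.

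Concretely, for the high-frequency pieces I would split the time interval at the first time $\tau_k$ where $\Phi(\tau_k)=2^{-2k}$ (well-defined for $k$ large because $\Phi$ is continuous, nondecreasing, and positive for $t>0$ by Assumption \ref{ini as}(i)). On $[0,\tau_k]$ the total ``parabolic budget'' $\int_0^{\tau_k}\delta\,dt=2^{-2k}$ is tiny, so one expects $\|u^{(k)}_{xx}\|_{\bL_p([0,\tau_k])}$ to be controlled, after applying Theorem \ref{main thm 1} on $[0,\tau_k]$ with the weight $\delta$, by $\|\Delta_k u_0\|_{B_p^{2-2/p}}\approx 2^{(2-2/p)k}\|\Delta_k u_0\|_{L_p}$ — but we need it without the weight, so here I pay for it using $|\{t\le t_0:\ \Phi(t)\in[h,4h)\}|\le N_0 h^{1/\beta}$: on the dyadic band where $\Phi(t)\approx 2^{-2j}$ (for $j\ge k$), the Lebesgue length is $\lesssim 2^{-2j/\beta}$ while $\delta$ is comparable to (length of band in $\Phi$)/(Lebesgue length) $\approx 2^{-2j}/2^{-2j/\beta}$, so trading the $\delta$-weight for Lebesgue measure costs a factor $(2^{-2j}/2^{-2j/\beta})^{-1}$ raised to the appropriate power; summing the geometric series in $j\ge k$ and using the classical decay of the solution (energy-type bound from Theorem \ref{classic} applied after the budget $2^{-2k}$ is spent) yields $\|u^{(k)}_{xx}\|_{\bL_p(T)}\lesssim 2^{2(1-1/(\beta p))k}\|\Delta_k u_0\|_{L_p}$. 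On $[\tau_k,T]$ one uses that $\Phi(T)-\Phi(\tau_k)$ is bounded and that the solution has already been regularized, so $u^{(k)}_{xx}$ is small there; combined with the crude bound $\|u^{(k)}(\tau_k)\|_{L_p}\lesssim\|\Delta_k u_0\|_{L_p}$ from \eqref{eqn classic} and a repeat of Theorem \ref{main thm 1} on $[\tau_k,T]$, this part is lower order. Raising to the $p$-th power, summing over $k\ge1$, and recognizing $\sum_k 2^{2(1-1/(\beta p))pk}\|\Delta_k u_0\|_{L_p}^p=\|u_0\|_{B_p^{2(1-1/(\beta p))}}^p$ (up to the $S_0$ term, which is handled trivially since $S_0(u_0)\in L_p$ has bounded frequency) gives \eqref{0510 e 1}. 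Existence and uniqueness of $u\in C([0,T];L_p)\cap\bH_p^2(T)$ then follow from Theorem \ref{classic} together with the convergence of $\sum_k u^{(k)}$ in $\bH_p^2(T)$, which is exactly what the estimate provides.

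Two technical points deserve care. First, Theorem \ref{main thm 1} is stated on $[0,T]$ but I am applying it on subintervals $[0,\tau_k]$ and $[\tau_k,T]$; this is fine because one may translate time and because Assumption \ref{co as} is time-translation-insensitive, but on $[\tau_k,T]$ I need the initial value $u^{(k)}(\tau_k,\cdot)$ to lie in the right Besov space, and here I must use the smoothing that has already occurred on $[0,\tau_k]$ — quantitatively, that $u^{(k)}(\tau_k)$ is morally at frequency $\le 2^k$ with $\|u^{(k)}(\tau_k)\|_{B_p^{2-2/p}}\lesssim 2^{(2-2/p)k}\|\Delta_k u_0\|_{L_p}$, which again follows from \eqref{eqn classic} applied to the (frequency-truncated) solution. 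Second, Assumption \ref{ini as}(iii), $|a^{ij}(t)|\le\bar N_0\delta(t)$, enters precisely when estimating $\|u^{(k)}(\tau_k)\|_{L_p}$ via the integrated equation $u^{(k)}(\tau_k)=\Delta_k u_0+\int_0^{\tau_k}a^{ij}u^{(k)}_{x^ix^j}\,ds$, bounding $|a^{ij}|$ by $\bar N_0\delta$ to land in the $\delta$-weighted space where Theorem \ref{main thm 1} applies (this is the same mechanism as in Remark \ref{variable x}, inequalities \eqref{2018061301}--\eqref{2018061321}).

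The main obstacle I anticipate is the bookkeeping in the geometric summation: one must verify that the exponent coming out of the trade ``$\delta$-weight $\to$ Lebesgue measure'' on the dyadic $\Phi$-bands, when combined with the frequency factor $2^{(2-2/p)k}$ from the Besov norm and the decay of the heat-type semigroup past the diffusion threshold, produces exactly the exponent $2(1-1/(\beta p))=2-2/(\beta p)$ and a convergent series — in particular that $\beta$ enters only through $1/(\beta p)$ and that the borderline case $\beta=1$ recovers the classical $2-2/p$. Getting the weight powers $\delta$ versus $\delta^{1-p}$ lined up correctly with the $1/\beta$ loss, and making sure the $[\tau_k,T]$ contribution is genuinely summable rather than merely bounded term-by-term, is where the real work lies; everything else is either Theorem \ref{main thm 1}, Theorem \ref{classic}, or standard Littlewood--Paley manipulation.
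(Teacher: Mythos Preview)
Your scaffolding---Littlewood--Paley decomposition of $u_0$ in space, dyadic bands in the clock $\Phi(t)=\int_0^t\delta(s)\,ds$, and Assumption~\ref{ini as}(ii) to bound the Lebesgue measure of each band---is exactly what the paper uses. The genuine gap is in the mechanism you propose for passing from the $\delta$-weighted estimate of Theorem~\ref{main thm 1} to the unweighted one. On a band $B_j=\{t\le t_0:\Phi(t)\in[2^{-2j-2},2^{-2j})\}$ you only know $\int_{B_j}\delta\,dt\approx 2^{-2j}$ and $|B_j|\lesssim 2^{-2j/\beta}$; nothing in the hypotheses prevents $\delta$ from vanishing on a subset of $B_j$ of positive measure (think of $\delta(t)=1+\sin(1/t)$), so the heuristic ``$\delta\approx 2^{-2j}/|B_j|$ on $B_j$'' is only an average and the weighted integral $\int_{B_j}\|u^{(k)}_{xx}\|^p\delta\,dt$ gives no control of $\int_{B_j}\|u^{(k)}_{xx}\|^p\,dt$. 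The same issue is fatal on $[\tau_k,T]$: repeating Theorem~\ref{main thm 1} there again yields only a $\delta$-weighted bound, while the crude frequency-localized estimate $\|u^{(k)}_{xx}(t)\|_{L_p}\lesssim 2^{2k}\|\Delta_k u_0\|_{L_p}$ (which is all Theorem~\ref{classic} provides---there is no decay) gives $\int_{\tau_k}^T\|u^{(k)}_{xx}\|^p\,dt\lesssim 2^{2kp}T\|\Delta_k u_0\|_{L_p}^p$, not summable in $k$. You flag this yourself (``making sure the $[\tau_k,T]$ contribution is genuinely summable \dots is where the real work lies''), but the tool you invoke cannot do that work.

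What is missing is a \emph{pointwise-in-$t$} decay estimate once $\Phi(t)2^{2k}\gg 1$. The paper supplies this directly from the fundamental solution: Lemma~\ref{kernel l1 est} proves
\[
\|\Delta^{\gamma/2}p_k(t,\cdot)\|_{L_1}\lesssim 2^{k\gamma}\exp\Bigl(-c\,\Phi(t)\,2^{2k}\Bigr),
\]
and it is here (not in an integrated-equation argument) that Assumption~\ref{ini as}(iii) is actually used, to control the $\xi$-derivatives of the symbol $\exp(-\int_0^t a^{ij}\,ds\,\xi^i\xi^j)$. With this exponential bound in hand, Lemma~\ref{0504 lem 1} carries out precisely your dyadic-band computation---splitting $\{2^{2j}\Phi(t)\le1\}$ versus $\{2^{2j}\Phi(t)>1\}$ and summing via \eqref{0427 e 1}---but on the \emph{unweighted} time integral, because the kernel bound is pointwise in $t$ rather than integrated against $\delta\,dt$. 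In short, your plan is one lemma away from the paper's proof, and that lemma (the exponential $L_1$ kernel estimate) is the essential analytic input that Theorem~\ref{main thm 1} cannot replace.
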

The proof of this theorem will be given in Section \ref{pf main thm 2}.

We remark that  unlike   (\ref{main-zero}), estimate (\ref{0510 e 1})  control $u_{xx}$ without the help of the weight $\delta(t)dt$.  
\begin{remark}
					\label{initial remark}
If $a^{ij}(t) = \delta^{ij}$, where
$\delta^{ij}$ is the Kronecker delta, then (\ref{0510 e 1}) is a classical result for the heat equation.
In this case, the constant $N$ in (\ref{0510 e 1})  depends  only on $d$ and $p$, and in particular it is independent of $T$. This  can be easily checked by the standard scaling argument 
(in homogeneous Besov space). 
\end{remark}

Example \ref{ex 4} and Theorem \ref{main thm 2} yield the following result.

\begin{corollary}
Let $p \in (1,\infty)$, $T \in (0,\infty)$, $u_0 \in B^{2-2/p}_p$, and $d=1$. Then there exists a unique solution 
$u  \in  \bH^2_p(T)$
to  the equation 
\begin{equation}
      \label{deg}
u_t=(1+\sin(1/t))u_{xx}, \quad t>0; \quad u(0,\cdot)=u_0,
\end{equation}
and we have
\begin{equation}
    \label{eqn maximal}
\|u\|_{\bH^2_p(T)}\leq N\|u_0\|_{B^{2(1-1/p)}_p}.
\end{equation}
\end{corollary}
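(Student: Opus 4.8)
The plan is to simply verify the hypotheses of Theorem \ref{main thm 2} for the one-dimensional equation \eqref{deg} and read off the conclusion. First I would set $d=1$ and $a^{11}(t)=\delta(t)=1+\sin(1/t)$. Since $0\le 1+\sin(1/t)\le 2$, the coefficient is measurable and bounded, so Assumption \ref{co as}(i) holds with $M=2$; and $\delta(t)|\xi|^2 = a^{11}(t)\xi^2$ trivially, so Assumption \ref{co as}(ii) holds as well (here $\delta(t)$ is exactly the ``smallest eigenvalue''). This takes care of the standing assumption on the coefficients.

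Next I would check Assumption \ref{ini as}. Part (iii) is immediate in the one-dimensional case: $|a^{11}(t)|=\delta(t)$, so \eqref{1d} holds with $\bar N_0=1$ (as noted in the remark following Assumption \ref{ini as}). For parts (i) and (ii), I would invoke Example \ref{ex 4}: there it is shown that $\delta(t)=1+\sin(1/t)$ satisfies $\int_0^t\delta(s)\,ds\ge t/4$ for all small $t>0$, which is the scaling condition \eqref{osc con} with $\beta_0=1$ (and some $t_0>0$, $N=1/4$). In particular $\int_0^t\delta(s)\,ds>0$ for every $t>0$, giving Assumption \ref{ini as}(i), and the computation in Example \ref{ex 4} (via the argument of Example \ref{ex 3}) gives \eqref{0427 e 1} with $\beta=1$, which is Assumption \ref{ini as}(ii). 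So all hypotheses of Theorem \ref{main thm 2} are met with $\beta=1$.

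Now I would apply Theorem \ref{main thm 2} with $\beta=1$: for any $u_0\in B_p^{2(1-1/(\beta p))}=B_p^{2-2/p}$ there is a unique solution $u\in C([0,T];L_p)\cap\bH^2_p(T)$ to \eqref{deg}, and
\[
\|u_{xx}\|_{\bL_p(T)}\le N\|u_0\|_{B_p^{2-2/p}}.
\]
To finish I would upgrade this to the full $\bH^2_p(T)$ norm on the left of \eqref{eqn maximal}. By definition $\|u\|_{\bH^2_p(T)}^p = \int_0^T\|u(t,\cdot)\|_{H^2_p}^p\,dt$, and $\|u(t,\cdot)\|_{H^2_p}\le N(\|u(t,\cdot)\|_{L_p}+\|u_{xx}(t,\cdot)\|_{L_p})$ by the standard equivalence of the $H^2_p$-norm with $\|u\|_{L_p}+\|u_{xx}\|_{L_p}$ on $\fR^d$. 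The $u_{xx}$ part is controlled by \eqref{0510 e 1}; for the $u$ part I would use the bound $\|u\|_{C([0,T];L_p)}\le N\|u_0\|_{L_p}$ from Theorem \ref{classic} (with $f=0$) together with the continuous embedding $B_p^{2-2/p}\hookrightarrow L_p$, so that $\|u\|_{\bL_p(T)}\le N\|u_0\|_{L_p}\le N\|u_0\|_{B_p^{2-2/p}}$. Combining these gives \eqref{eqn maximal}.

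There is essentially no obstacle here: the corollary is a direct specialization, and the only mild points to be careful about are the verification that Example \ref{ex 4} really supplies Assumption \ref{ini as}(ii) with the stated $\beta=1$ (rather than some larger exponent), and the routine passage from the $u_{xx}$ estimate to the full $\bH^2_p$ estimate. Both are handled above. I would also note in passing, as the paper does in Remark \ref{initial remark}, that the exponent $2-2/p$ is exactly the maximal-regularity exponent for the heat equation $u_t=\Delta u$, so \eqref{eqn maximal} is sharp in that sense.
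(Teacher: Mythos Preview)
Your proposal is correct and follows exactly the route indicated by the paper, which simply states that the corollary is yielded by Example~\ref{ex 4} together with Theorem~\ref{main thm 2}. Your additional step of passing from the $\|u_{xx}\|_{\bL_p(T)}$ bound of \eqref{0510 e 1} to the full $\bH^2_p(T)$ norm via Theorem~\ref{classic} and the embedding $B_p^{2-2/p}\hookrightarrow L_p$ is the natural way to make that passage explicit (the paper's own proof of Theorem~\ref{main thm 2} in fact already establishes the full $\bH^2_p(T)$ estimate, see \eqref{real final}).
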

Recall that (\ref{eqn maximal}) is the maximal regularity of the solution to the heat equation $u_t=u_{xx}$.  Thus, the instant smoothing effect (or regularity of solution) of  degenerate  equation (\ref{deg})  is not affected at all by the degeneracy of the leading coefficient.

\mysection{Proof of Theorem \ref{main thm 1}}
							\label{pf main thm 1}

Denote $A(t):=\left(a^{ij}(t) \right)$. Recall in Theorem \ref{main thm 1}, we only assume
$$
0\leq \delta(t) I_{d\times d} \leq A(t), \quad |a^{ij}(t)|\leq M.
$$

 \begin{lemma}
    \label{lemma classic}
Let $\delta(t)\geq \varepsilon >0$, $u_0 \in B_p^{2 \left(1-1/ p \right)}$, $f\in \bL_p(T)$, and $u\in \bL_p(T)$ be a solution to problem (\ref{eqn 2.16.1}), that is
$$
u_t=a^{ij}u_{x^ix^j}+f, \quad t \in (0,T); \quad u(0,\cdot)=u_0.
$$
Then
 \begin{align}
					\label{constant}
 \|u_{xx}\|_{\bL_p(T)}
 \leq N(d,p) \left( \varepsilon^{-1} \|f\|_{\bL_p(T)} + \|u_0\|_{B_p^{2 \left(1-1/ p \right)}}\right).
 \end{align}
  \end{lemma}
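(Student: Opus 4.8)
By linearity I would split $u=u^{(1)}+u^{(2)}$, where $u^{(1)}$ solves \eqref{eqn 2.16.1} with $f=0$ and $u^{(2)}$ solves it with $u_0=0$; since existence and uniqueness of a solution in $\bL_p(T)$ are already provided by Theorem \ref{classic}, only the a priori bound \eqref{constant} has to be produced. The structural feature I would exploit is that $a^{ij}=a^{ij}(t)$ is independent of $x$, so the Fourier transform in $x$ decouples \eqref{eqn 2.16.1} into the scalar ODEs $\cF[u]_t=-a^{ij}(t)\xi_i\xi_j\cF[u]+\cF[f]$, whose solution gives
\[
u(t,\cdot)=G_{0,t}u_0+\int_0^t G_{r,t}f(r,\cdot)\,dr ,
\]
where $G_{r,t}$ is the convolution operator with Fourier symbol $\exp\bigl(-\xi_i\xi_j\int_r^t a^{ij}(s)\,ds\bigr)$. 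Writing $B_{r,t}:=\bigl(\int_r^t a^{ij}(s)\,ds\bigr)_{i,j}$, Assumption \ref{co as}(ii) together with $\delta\ge\varepsilon$ gives $\varepsilon(t-r)I\le B_{r,t}$, and this lower bound (not any upper bound on the $a^{ij}$) is what will drive the estimate.

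For $u^{(1)}$ I would decompose $u_0=S_0u_0+\sum_{k\ge1}\Delta_k u_0$. Since $G_{0,t}$ is a Fourier multiplier it preserves spectral supports, so $\Delta_k\bigl(D_iD_ju^{(1)}\bigr)(t,\cdot)=D_iD_jG_{0,t}\Delta_k u_0$; restricted to the annulus $\{|\xi|\approx 2^k\}$, the linear change of variables turning the anisotropic Gaussian $e^{-B_{0,t}\xi\cdot\xi}$ into the standard one shows that the $L_p$-multiplier norm of $D_iD_jG_{0,t}$ on that annulus is at most $N(d,p)\,2^{2k}e^{-c\varepsilon t2^{2k}}$, hence
\[
\|\Delta_k\bigl(D_iD_ju^{(1)}\bigr)(t,\cdot)\|_{L_p}\le N\,2^{2k}e^{-c\varepsilon t2^{2k}}\|\Delta_k u_0\|_{L_p},
\]
together with $\|D_iD_jG_{0,t}S_0u_0\|_{L_p}\le N\|S_0u_0\|_{L_p}$ for the low frequencies. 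Raising to the power $p$, integrating in $t$ over $(0,T)$ (using $\int_0^\infty 2^{2kp}e^{-cp\varepsilon t2^{2k}}\,dt\le N\varepsilon^{-1}2^{(2p-2)k}$) and summing in $k$ through the square-function description of $\|\cdot\|_{L_p}$, I would recognize $\sum_{k}2^{(2p-2)k}\|\Delta_k u_0\|_{L_p}^p$ as (a multiple of) $\|u_0\|_{B_p^{2(1-1/p)}}^p$ and conclude the bound on $\|u^{(1)}_{xx}\|_{\bL_p(T)}$.

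For $u^{(2)}$ (the case $u_0=0$) the needed estimate $\|u^{(2)}_{xx}\|_{\bL_p(T)}\le N\varepsilon^{-1}\|f\|_{\bL_p(T)}$ is a maximal $L_p$-regularity statement. One clean route is to invoke the classical maximal regularity for uniformly parabolic equations with coefficients merely measurable in $t$ (applicable here because $x$-independent coefficients are trivially of vanishing mean oscillation in $x$) and then read off the factor $\varepsilon^{-1}$ from the parabolic scaling $t\mapsto\varepsilon t$; already at $p=2$, Plancherel together with Young's inequality in $t$ (and $\|\lambda e^{-\lambda\,\cdot}\|_{L_1}=1$) gives $\bigl\| |\xi|^2\cF[u^{(2)}]\bigr\|_{L_2}\le\varepsilon^{-1}\|\cF[f]\|_{L_2}$, exhibiting exactly this scaling. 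Alternatively I would argue directly: frequency-localizing as above yields $\|\Delta_k\bigl(D_iD_ju^{(2)}\bigr)(t,\cdot)\|_{L_p}\le N\int_0^t 2^{2k}e^{-c\varepsilon(t-r)2^{2k}}\|\Delta_k f(r,\cdot)\|_{L_p}\,dr$; the time kernel $K_k(s)=2^{2k}e^{-c\varepsilon s2^{2k}}$ (for $s>0$) has $\|K_k\|_{L_1}=1/(c\varepsilon)$ uniformly in $k$; summing over $k$ with a vector-valued ($\ell^2$) estimate for the family $\{g\mapsto\varepsilon K_k*g\}_k$ on $L_p((0,T))$ then gives $\|u^{(2)}_{xx}\|_{\bL_p(T)}\le N\varepsilon^{-1}\|f\|_{\bL_p(T)}$.

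The hardest point will be this last step for the source term: turning the pointwise-in-frequency exponential decay into a genuine $L_p([0,T];L_p)$ bound (the vector-valued, $R$-bounded family argument) while keeping the constant proportional to $\varepsilon^{-1}$ and independent of the size of the $a^{ij}$. It is exactly this uniformity that rules out the naive estimate $\|u^{(2)}_{xx}(t,\cdot)\|_{L_p}\le N\varepsilon^{-1}\int_0^t(t-r)^{-1}\|f(r,\cdot)\|_{L_p}\,dr$ --- which follows at once from $\|D_iD_jG_{r,t}\|_{L_p\to L_p}\le N(\varepsilon(t-r))^{-1}$ but has a non-integrable kernel --- and forces the frequency-localized analysis; a secondary, more routine point is the identification of $B_p^{2(1-1/p)}$ as the correct trace space used for $u^{(1)}$.
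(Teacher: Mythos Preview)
You correctly identify the crucial feature of the lemma --- the constant $N$ must depend only on $d$ and $p$, and in particular not on the upper bound $M$ of $|a^{ij}(t)|$ --- and you say as much. However, the argument you sketch does not actually deliver this independence.

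Your key bound $\|\Delta_k(D_iD_jG_{0,t})\|_{L_p\to L_p}\le N(d,p)\,2^{2k}e^{-c\varepsilon t2^{2k}}$ does not follow from the linear change of variables you describe. Normalizing $e^{-B_{0,t}\xi\cdot\xi}$ to $e^{-|\eta|^2}$ via $\eta = B_{0,t}^{1/2}\xi$ sends the Littlewood--Paley annulus $\{|\xi|\sim 2^k\}$ to an ellipsoidal shell of eccentricity $\sqrt{M/\varepsilon}$; whether you then estimate by Mikhlin or by the $L_1$ norm of the kernel, derivatives of the localizer $\hat\Psi(2^{-k}B_{0,t}^{-1/2}\eta)$ bring in $\|B_{0,t}^{-1/2}\|$ while the Mikhlin weight $|\eta|^{|\alpha|}$ is of order $(Mt)^{|\alpha|/2}2^{k|\alpha|}$ on the support, so the resulting constant is a power of $M/\varepsilon$. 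The same defect appears in your route~(a) for $u^{(2)}$: after the scaling $t\mapsto\varepsilon t$ the rescaled coefficients satisfy $I\le\tilde A(t)\le(M/\varepsilon)I$, and the standard maximal-regularity constant depends on that ellipticity ratio, not just on the lower bound.

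The paper bypasses this via a probabilistic splitting whose Fourier translation is the elementary factorization
\[
e^{-B_{r,t}\xi\cdot\xi}=e^{-\frac{\varepsilon}{2}(t-r)|\xi|^2}\cdot e^{-\bigl(B_{r,t}-\frac{\varepsilon}{2}(t-r)I\bigr)\xi\cdot\xi}.
\]
The second factor is the Fourier transform of a probability density (since $B_{r,t}-\tfrac{\varepsilon}{2}(t-r)I\ge\tfrac{\varepsilon}{2}(t-r)I>0$), so convolution by it is an $L_p$ contraction; the first factor reduces everything to the isotropic case $A=\tfrac{\varepsilon}{2}I$, for which the heat-equation estimate with constant $N(d,p)$ is classical. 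Concretely, the paper writes $X_t\stackrel{d}{=}Y_t+Z_t$ with $Y_t=\sqrt{\varepsilon}\,W_t$ and $Z_t=\int_0^t\bar\sigma_s\,d\bar W_s$ independent, conditions on $Z$, uses translation invariance of $\|\cdot\|_{L_p}$, and applies the isotropic case. Your scheme can be salvaged by inserting exactly this factorization --- it immediately gives $\|\Delta_kD_iD_jp(t,\cdot)\|_{L_1}\le\|\Delta_kD_iD_jp_{\varepsilon/2}(t,\cdot)\|_{L_1}$, with the right-hand side $M$-free --- but without it the claimed independence from $M$ is not justified.
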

  The key point of Lemma \ref{lemma classic} is that   the constant $N(d,p)$ in (\ref{constant}) is independent of $T$ and $M$.     Lemma \ref{lemma classic} might be a very well known result, but we provide a (probabilistic) proof for the sake of the completeness.

  \begin{proof}
  
  We follow the idea in the proof of  \cite[Theorem 2.2]{Krylov1994}.
Let $W_t=(W^1_t,\cdots, W^d_t)$ be a $d$-dimensional Wiener process on a probability space $(\Omega,\cF,P)$. 
Since  $A(t)$ is a nonnegative symmetric matrix, there exists a nonnegative symmetric matrix $\sigma(t)=(\sigma^{ij}(t))$ such that
$$
2A(t)= \sigma^2(t).
$$
We define 
\begin{align}
							\label{pdf x}
X_t := \int_0^t \sigma(t)dW_t,  \quad (i.e.,\, X^i_t=\sum_{k=1}^d \int^t_0 \sigma^{ik}(s)dW^k_s, \, (i=1,2,\cdots,d)).
\end{align}
It is well known  (see e.g. \cite{Krylov1994}) that the solution to equation (\ref{eqn 2.16.1}) is given by

\begin{align}
						\label{sol rep}
u(t,x)
= \bE[u_0(x+X_t)] 
+ \int_0^t\bE[f(s,x+X_t-X_s ] ds.
\end{align}
In particular if $A(t)=\varepsilon I_{d\times d}$, where $I_{d\times d}$ is the $d \times d$ identity matrix and $\varepsilon>0$, then  $X_t=\sqrt{2\varepsilon}W_t$,  and 
$$
u(t,x)
=\bE[u_0(x+\sqrt{2 \varepsilon} W_t)] 
+\int_0^t\bE[f(s,x+\sqrt{2\varepsilon}W_{t}-\sqrt{2\varepsilon}W_{s}] ds.
$$

\vspace{3mm}

{\bf Step 1}.   Assume  $\delta(t)=\varepsilon$ and $A(t)=\varepsilon I_{d\times d}$.  Then $v(t,x)=u(\varepsilon^{-1}t, x)$ satisfies the heat equation 
$$
v_t(t,x)=\Delta v(t,x)+\varepsilon^{-1}f(\varepsilon^{-1}t,x).
$$
From the classical result for the heat equation (cf. Remark \ref{initial remark}), it follows that 
$$
\|u_{xx}\|_{\bL_p(T)}
\leq N(d,p) \left( \varepsilon^{-1} \|f\|_{\bL_p(T)} + \|u_0\|_{B_p^{2 \left(1-1/ p \right)}}\right),
$$
and equivalently, we get
\begin{align}
					\notag
&\left\|D^2_x \left[\bE[u_0(x+\sqrt{2 \varepsilon} W_t)]
+\int_0^t\bE[f(s,x+\sqrt{2\varepsilon}W_{t}-\sqrt{2\varepsilon}W_{s} ] ds \right] \right\|_{\bL_p(T)}  \\
					\label{0503 e 2}
&\leq N(d,p) \left( \varepsilon^{-1} \|f\|_{\bL_p(T)} + \|u_0\|_{B_p^{2 \left(1-1/ p \right)}}\right).
\end{align}
\smallskip

{\bf Step 2}. General case. 
  Write
  $$
  A(t)= \left(A(t)-\frac{\varepsilon}{2}I_{d\times d} \right)+\frac{\varepsilon}{2}I_{d\times d}=: \bar{A}(t)+\frac{\varepsilon}{2}I_{d\times d}.
  $$
Let $\bar{W}_t$  be  a  $d$-dimensional Winer process which is independent of $W_t$ on a probability space $(\Omega,\cF,P)$. Denote 
  $$
  Y_t=\sqrt{\varepsilon}W_t, \quad \bar \sigma(t)=\sqrt{2\bar{A}(t)}, \quad Z_t=\int^t_0 \bar{\sigma}_s d\bar{W}_s.
  $$  
  Then it is easy to show that for $0\leq s<t$, $X_t-X_s$ and $Y_t+Z_t-Y_s-Z_s$ have the same probability distribution. 
  Indeed, it suffices to check that the characteristic functions of two random variables coincide (cf. \cite[Theorem 1.4.12]{Krylov1995}) and   for  any $\xi \in \fR^d$, we get
\begin{eqnarray*}
&&\bE e^{i \xi \cdot (Y_t-Y_s +Z_t-Z_s)} =\bE e^{i \xi \cdot (Y_t-Y_s)} \bE e^{i\xi \cdot (Z_t-Z_s)}
=e^{-\varepsilon (t-s)|\xi|^2} \bE e^{i \xi \cdot \int^t_s \bar{\sigma}_s d\bar{W}_r}\\
&=&e^{-\varepsilon (t-s)|\xi|^2} e^{-\int^t_s |\bar{\sigma}_r \xi|^2 dr}=e^{-\int^t_s |\sigma_r \xi|^2 dr}=\bE e^{i\xi \cdot \int^t_s \sigma_r dW_r}.
 \end{eqnarray*}
The third and fifth equalities above  are trivial if $\bar{\sigma}_t$ is a simple function, and  the general case is obtained based on a standard approximation.

  From (\ref{sol rep}) it follows that  (recall that $Y$ and $Z$ are independent)
  \begin{align*}
  D^2_xu(t,x) 
   &= D^2_x \left(\bE[u_0(x+ Y_t + Z_t)] + \int^t_0 \bE \left[f(s,x+Y_{t}-Y_{s}+Z_{t}-Z_{s}) \right]ds \right)\\
  &=D^2_x \left(  \bE' \left( \bE \left[u_0(x+Y_{t}(\omega)+Z_{t}(\omega')) \right] \right) \right) \\
  &\quad +D^2_x \left( \int^t_0 \bE' \left( \bE \left[f(s,x+Y_{t}(\omega)-Y_{s}(\omega)+Z_{t}(\omega')-Z_{s}(\omega')) \right] \right)ds \right),
  \end{align*}
where $\bE$ and $\bE'$ denote the expectations with respect to $\omega$ and $\omega'$, respectively. 
For each $\omega' \in \Omega$, denote 
  $$
 f_{Z(\omega')}(s,x)=f(s,x-Z_s(\omega')).
  $$
Then,
  \begin{align*}
  D^2_xu(t,x)
  &=\bE' \left[ D^2_x \left(   \bE \left[u_0(x+\sqrt{\varepsilon}W_{t}(\omega)+Z_{t}(\omega')) \right] \right) \right] \\
  &\quad +\bE' \left[D^2_x \left(\int^t_0 \bE \left[f_{Z(\omega')}(s,x+\sqrt{\varepsilon}W_{t}(\omega)-\sqrt{\varepsilon}W_{s}(\omega)+Z_t(\omega')) \right] ds \right) \right].
  \end{align*}
Finally, since the $L_p(\fR^d)$-norm is translation invariant,  by (\ref{0503 e 2}) we get
  \begin{align*}
& \|D^2_xu\|^p_{\bL_p(T)} \\
&\leq \bE' \Bigg[ \bigg\| 
D^2_x \left(    \bE \left[u_0(x+\sqrt{\varepsilon}W_{t}(\omega)+Z_{t}(\omega')) \right]  \right) \\
&\qquad +D^2_x \left( \bE \int^t_0 f_{Z(\omega')}(s,x+\sqrt{\varepsilon}W_{t}(\omega)-\sqrt{\varepsilon}W_{s}(\omega)+Z_t(\omega')) ds \right) \bigg\|^p_{\bL_p(T)} \Bigg] \\
&=\bE' \Bigg[ \bigg\| 
D^2_x \left(    \bE \left[u_0(x+\sqrt{\varepsilon}W_{t}(\omega)) \right]  \right)   \\
&\qquad + D^2_x \left[\bE \int^t_0 f_{Z(\omega')}(s,x+\sqrt{\varepsilon}W_{t}(\omega)- \sqrt{\varepsilon}W_{s}(\omega)) ds \right]\bigg\|^p_{\bL_p(T)} \Bigg] \\
&\leq 
N(d,p)  \bE' \left[ \varepsilon^{-1}  \|f_{Z(\omega')}\|^p_{\bL_p(T)}  + \|u_0\|_{B_p^{2 \left(1-1/ p \right)}}\right] \\
&=N(d,p)  \left[ \varepsilon^{-1}  \|f\|^p_{\bL_p(T)}  + \|u_0\|_{B_p^{2 \left(1-1/ p \right)}}\right].
  \end{align*}
The lemma is proved. 
   \end{proof}

 Note that to prove Theorem \ref{main thm 1} it is enough to assume $n=0$. Thus we only need to prove the following.
 
 \begin{lemma}
					\label{opt est}
Let $p>1$, $T<\infty$, $f\in \bL_p(T,\delta^{1-p})$, and $u \in C([0,T];L_p)$ be a solution to equation (\ref{eqn 2.16.1}). 
Then 
\begin{align*}
 \int^T_0 \|u_{xx}(t)\|^p_{L_p} \delta(t)dt 
 \leq N(p,d) \left(\int^T_0 \|f(t)\|^p_{L_p}\delta^{1-p}(t) dt + \|u_0\|^p_{B_p^{2 \left(1-1/ p \right)}}\right).
 \end{align*}
\end{lemma}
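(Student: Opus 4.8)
The plan is to follow the approximation strategy announced in the introduction: prove the estimate first under the extra hypothesis $\delta(t)\ge\varepsilon>0$ with a constant independent of $\varepsilon$, and then let $\varepsilon\downarrow0$. We may assume $u_0\in B_p^{2(1-1/p)}$, since otherwise the right-hand side is infinite and there is nothing to prove. \emph{Step 1 (nondegenerate regularization).} For $\varepsilon>0$ set $A_\varepsilon(t):=A(t)+\varepsilon I_{d\times d}$, with entries $a^{ij}_\varepsilon$; then $A_\varepsilon(t)\ge\delta_\varepsilon(t)I_{d\times d}$ with $\delta_\varepsilon:=\delta+\varepsilon\ge\varepsilon$ and $|a^{ij}_\varepsilon|\le M+\varepsilon$. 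By Theorem \ref{classic} there is a unique $u^\varepsilon\in C([0,T];L_p)$ with $u^\varepsilon_t=a^{ij}_\varepsilon u^\varepsilon_{x^ix^j}+f$, $u^\varepsilon(0,\cdot)=u_0$.

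\emph{Step 2 (time change and the $\varepsilon$-free estimate).} Because $\delta_\varepsilon\ge\varepsilon>0$, the map $\tau_\varepsilon(t):=\int_0^t\delta_\varepsilon(s)\,ds$ is a strictly increasing absolutely continuous bijection of $[0,T]$ onto $[0,\tau_\varepsilon(T)]$ with Lipschitz inverse $t_\varepsilon$. Put $\tilde u(\tau,x):=u^\varepsilon(t_\varepsilon(\tau),x)$, $\tilde f(\tau,x):=f(t_\varepsilon(\tau),x)/\delta_\varepsilon(t_\varepsilon(\tau))$, and $\bar a^{ij}(\tau):=a^{ij}_\varepsilon(t_\varepsilon(\tau))/\delta_\varepsilon(t_\varepsilon(\tau))$. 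Substituting $s=t_\varepsilon(\rho)$ in the identity of Definition \ref{sol def} for $u^\varepsilon$ shows that $\tilde u$ solves $\tilde u_\tau=\bar a^{ij}\tilde u_{x^ix^j}+\tilde f$ on $[0,\tau_\varepsilon(T)]$ with $\tilde u(0,\cdot)=u_0$, where now $(\bar a^{ij}(\tau))\ge I_{d\times d}$ and $|\bar a^{ij}|\le(M+\varepsilon)/\varepsilon$; and a change of variables gives $\tilde u\in\bL_p(\tau_\varepsilon(T))$ together with
\begin{equation*}
\int_0^{\tau_\varepsilon(T)}\|\tilde f(\tau)\|_{L_p}^p\,d\tau=\int_0^T\|f(t)\|_{L_p}^p\,\delta_\varepsilon^{1-p}(t)\,dt\le\int_0^T\|f(t)\|_{L_p}^p\,\delta^{1-p}(t)\,dt<\infty
\end{equation*}
(using $1-p<0$ and $\delta_\varepsilon\ge\delta$). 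Hence Lemma \ref{lemma classic}, applied with ellipticity constant $1$, yields $\|\tilde u_{xx}\|_{\bL_p(\tau_\varepsilon(T))}\le N(d,p)\big(\|\tilde f\|_{\bL_p(\tau_\varepsilon(T))}+\|u_0\|_{B_p^{2(1-1/p)}}\big)$, and the decisive point -- noted immediately after Lemma \ref{lemma classic} -- is that $N(d,p)$ depends on neither the time horizon $\tau_\varepsilon(T)$ nor the coefficient bound $(M+\varepsilon)/\varepsilon$, hence is independent of $\varepsilon$. Undoing the change of variables ($d\tau=\delta_\varepsilon(t)\,dt$) gives, for every $\varepsilon>0$,
\begin{equation*}
\int_0^T\|u^\varepsilon_{xx}(t)\|_{L_p}^p\,\delta(t)\,dt\le\int_0^T\|u^\varepsilon_{xx}(t)\|_{L_p}^p\,\delta_\varepsilon(t)\,dt\le N(d,p)\Big(\int_0^T\|f(t)\|_{L_p}^p\,\delta^{1-p}(t)\,dt+\|u_0\|_{B_p^{2(1-1/p)}}^p\Big).
\end{equation*}

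\emph{Step 3 (passage to the limit).} First take $u_0\in C_c^\infty$ and $f$ smooth with $f(t,\cdot)\in C_c^\infty$; then $u,u^\varepsilon\in C([0,T];H^n_p)$ for all $n$ by the last assertion of Theorem \ref{classic}, and $w^\varepsilon:=u^\varepsilon-u$ solves the $A_\varepsilon$-equation with zero initial value and right-hand side $\varepsilon\Delta u$. Applying \eqref{eqn classic} to $(1-\Delta)w^\varepsilon$ (permissible since the coefficients do not depend on $x$) gives $\|w^\varepsilon\|_{C([0,T];H^2_p)}\le\varepsilon N\|u\|_{\bH^4_p(T)}\to0$, so $\|u^\varepsilon_{xx}(t)\|_{L_p}\to\|u_{xx}(t)\|_{L_p}$ uniformly in $t$; since $\delta$ is bounded, dominated convergence carries the estimate of Step 2 to the limit and proves the claim for such data. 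For general $f\in\bL_p(T,\delta^{1-p})$ and $u_0\in B_p^{2(1-1/p)}$, approximate the data by smooth ones $(f_n,u_{0,n})$ converging in $\bL_p(T,\delta^{1-p})\times B_p^{2(1-1/p)}$; applying the smooth-data estimate to the differences $u_n-u_m$ shows $\{(u_n)_{xx}\}$ is Cauchy in $L_p([0,T],\delta\,dt;L_p)$, its limit is identified with $u_{xx}$ by combining $u_n\to u$ in $C([0,T];L_p)$ (from \eqref{eqn classic} and the embeddings $\bL_p(T,\delta^{1-p})\hookrightarrow\bL_p(T)$, $B_p^{2(1-1/p)}\hookrightarrow L_p$) with distributional differentiation, and the estimate passes to the limit.

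The conceptual core is Step 2: the time change turns the (nondegenerate but $t$-dependent) regularized equation into one whose smallest eigenvalue is identically $1$, where Lemma \ref{lemma classic} applies with a scale-invariant constant -- independent of the time horizon and of the ellipticity ratio -- which is exactly what lets it survive $\varepsilon\downarrow0$. I expect the main obstacle to be this last limit: because $\delta$ may vanish on a set of positive measure, the weighted space $L_p([0,T],\delta\,dt;L_p)$ is degenerate and one cannot read $u_{xx}$ directly off a weighted weak limit, which is why one first reduces to smooth data (where $u\in\bH^2_p(T)$ and $u^\varepsilon\to u$ strongly) and only then invokes a density argument; verifying density of smooth functions in $\bL_p(T,\delta^{1-p})$ is the one remaining technical point, and it is routine.
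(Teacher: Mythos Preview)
Your proof is correct and follows essentially the same route as the paper: the time-change argument reducing to a uniformly elliptic equation (your Step~2, the paper's Step~1) and the invocation of Lemma~\ref{lemma classic} with constant independent of $T$ and $M$ are identical, as is the final density/mollification step. The only organizational difference is in the $\varepsilon\to0$ limit: you introduce regularized solutions $u^\varepsilon$ and prove $u^\varepsilon\to u$ in $C([0,T];H^2_p)$ for smooth data, whereas the paper more economically keeps $u$ fixed, rewrites the equation as $u_t=a^{ij}_\varepsilon u_{x^ix^j}+(f-\varepsilon\Delta u)$, and passes to the limit directly by dominated convergence on the extra term $\varepsilon^p\delta_\varepsilon^{1-p}\|\Delta u\|_{L_p}^p$---but both variants are valid.
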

\begin{proof}

 {\bf{Step 1}}.  In this step, we assume $\delta(t) \geq \varepsilon>0$. Then
 $$
  \int^{\infty}_0 \delta(t)dt=\infty,   \quad \text{and}\quad  \delta^{-1}\in L_1([0,T]).
 $$
Denote
$$
\beta(t)=\int^t_0 \delta(s)ds,
$$
and let $\phi(t)$ be the inverse of $\beta(t)$, which is well defined for $t\in [0,\infty)$ due to $\delta(t)>\varepsilon>0$.  Then, since $\beta'(t)>0$, $\phi$ is differentiable everywhere and 
$$
\phi'(t)=\frac{1}{\beta'(\phi(t))}=\frac{1}{\delta(\phi(t))}\leq \varepsilon^{-1}.
$$
Thus, in particular, $\phi$ is absolutely continuous.
 Define
 $$
 v(t,x)=u(\phi(t),x).
 $$
 Note that $v$ satisfies 
 $$
 v_t=\tilde{a}^{ij}(t)v_{x^ix^j} +\tilde{f}, \quad v(0,\cdot)= u_0
 $$
 where $\tilde{f}(t,x)=f(\phi(t),x)\phi'(t)$ and
 $$
 \tilde{a}^{ij}(t):=a^{ij}(\phi(t))\phi'(t)=\frac{1}{\delta(\phi(t))} a^{ij}(\phi(t))\geq I_{d\times d}.
 $$
 Thus by Lemma \ref{lemma classic},
 for any $T_0>0$ such that $\phi(T_0) \leq T$, we have
 $$
 \|v_{xx}\|_{\bL_p(T_0)}\leq N(d,p) \left( \|\tilde{f}\|_{\bL_p(T_0)} +\|u_0\|_{B_p^{2 \left(1-1/ p \right)}}\right).
 $$
Taking $T_0>0$ so that $\phi(T_0)=T$, we have
 \begin{equation}
				 \label{deterministic}
\int^T_0 \|u_{xx}\|^p_{L_p}\delta(t)dt 
\leq N(d,p) \left(\int^T_0 \|\delta^{-1}(t)f\|^p_{L_p} \delta(t) dt+ \|u_0\|_{B_p^{2 \left(1-1/ p \right)}}\right).
\end{equation}

 {\bf{Step 2}}.  Let $\delta(t)\geq 0$, and assume  $f\in \bH^2_p(T)$ and $u_0\in H^2_p$. Then by Theorem \ref{classic},
 \begin{equation}
    \label{high}
 u\in \bH^2_p(T).
 \end{equation}
  Denote 
 $$A_{\varepsilon}(t)=A(t)+\varepsilon I, \quad \delta_{\varepsilon}(t):=\delta(t)+\varepsilon.
 $$
 Then 
 $$
 u_t=a^{ij}_{\varepsilon}u_{x^ix^j}+f-\varepsilon \Delta u, \quad u(0,\cdot)=u_0.
 $$
 By Step 1 and the fact that $\delta^{-p+1}_{\varepsilon}\leq \delta^{-p+1}$,
 \begin{align*}
&\int^T_0 \|u_{xx}\|^p \delta_{\varepsilon}(t) dt  \\
&\leq N(p,d) \left(\int^T_0 \|\delta^{-1}_{\varepsilon} (f-\varepsilon \Delta u)\|^p_{L_p} \delta_{\varepsilon}(t) dt 
+\|u_0\|_{B_p^{2 \left(1-1/ p \right)}}\right)\\
 &\leq N(p,d) \left(\int^T_0 \|\delta^{-1} f\|^p_{L_p}\delta(t)dt+\|u_0\|_{B_p^{2 \left(1-1/ p \right)}}\right)
 + N(p,d)  \int^T_0 \|\delta^{-1}_{\varepsilon} \varepsilon \Delta u\|^p_{L_p} \delta_{\varepsilon}(t) dt.
 \end{align*}
 Note that $\frac{\varepsilon^p}{\delta^{p}_{\varepsilon}} \delta_{\varepsilon}$ is bounded above by $\delta+\varepsilon$ and goes to zero as $\varepsilon \to 0$. This is because if $\delta(t)=0$ then $\frac{\varepsilon^p}{\delta^{p}_{\varepsilon}} \delta_{\varepsilon} =\varepsilon$.  
Moreover, $\Delta u \in \bL_p(T)$ due to (\ref{high}).
 Therefore, by the dominated convergence theorem and the inequality $\delta\leq \delta_{\varepsilon}$, we get
 \begin{eqnarray}
      \label{final}
  \int^T_0 \|u_{xx}\|^p \delta(t) dt 
  \leq N(p,d) \left( \int^T_0 \|\delta^{-1} f\|^p_{L_p}\delta(t)dt +\|u_0\|_{B_p^{2 \left(1-1/ p \right)}}\right).
  \end{eqnarray}

 {\bf{Step 3}}. In general, we consider a mollification with respect to the space variable.
 $$
 u^{\varepsilon}_t=a^{ij}(t)u^{\varepsilon}_{x^ix^j}+f^{\varepsilon}, \quad u^\varepsilon(0,\cdot) = u_0^\varepsilon
 $$
where 
$$
u^\varepsilon(t,x) = \varepsilon^{-d} \int_{\fR^d} u(t,x-y) \varphi(\varepsilon^{-1} y) dy
$$
 and $\varphi \in C_c^\infty(\fR^d)$ with the unit integral. 
 Then $f^{\varepsilon}\in \bH^2_p(T)$ and $u^{\varepsilon}_0\in H^2_p$, and by Theorem \ref{classic}  $u^{\varepsilon}\in \bH^2_p(T)$. By Step 2,  it follows that $u^{\varepsilon}$ is a Cauchy sequence in $L_p([0,T], \delta (t)dt ; H^2_p)$.  The limit $u$ certainly satisfies the equation and  estimate (\ref{final}) also holds for $u$.  
The lemma is proved. 
  \end{proof}
\bigskip

%
%

\mysection{Proof of Theorem \ref{main thm 2}} 
 										\label{pf main thm 2}

 Recall that in Theorem \ref{main thm 2} we assume
 $$
 \int^t_0 \delta(s)ds>0, \quad \forall\, t>0.
 $$
 
By taking the Fourier transform to the equation
\begin{equation}
    \label{eqn homo-2}
u_t=a^{ij}(t)u_{x^ix^j}, \quad t \in (0,T); \quad u(0,\cdot)=u_0,
\end{equation}
we get (at least formally)
\begin{align*}
\cF[u(t,\cdot)](\xi)= 
\exp \left( -\int_0^t a^{ij}(r) dr  \xi^i \xi^j \right) \cF[u_0](\xi),
\end{align*}
and the inverse Fourier transform gives
\begin{equation}
    \label{formula}
u(t,x) = \int_{\fR^d} p(t,x-y)u_0(y)dy =p(t, \cdot) \ast u_0(x)  =:\cT_t u_0(x),
\end{equation}
where 
\begin{align}
					\label{0505 e 1}
p(t,x)=\cF^{-1} \left[\exp \left( -\int_0^t a^{ij}(r) dr  \xi^i \xi^j \right) \right](x).
\end{align}
Note that  $p(t,x)$ is well defined since $\int^t_0 \delta(s)ds>0$ and 
$$
\left|\exp\left(-\int_0^t a^{ij}(r) dr  \xi^i \xi^j  \right) \right| \leq  \exp \left(-\int_0^t \delta(r) dr |\xi|^2 \right).
$$
It is easy to check that the representation formula (\ref{formula}) gives the unique solution $u\in \bL_p(T)$ to equation (\ref{eqn homo-2})  if $u_0\in \cS(\fR^d)$, and the general case also holds due to a standard approximation argument based on estimate (\ref{eqn classic}),
where $\cS(\fR^d)$ denotes the Schwartz space on $\fR^d$.

As in (\ref{del j}), define
$$
p_j(t,x):= \Delta_j p (t,x):=\Delta_j p (t,\cdot) (x):= \cF^{-1} \left[ \hat \Psi (2^{-j} \xi) \cF \left[p(t,\cdot) \right](\xi) \right] (x).
$$

 \begin{lemma}
                    \label{kernel l1 est}
Let $k \in \bZ$ and $\gamma \in \fR$. 
Suppose that  Assumptions \ref{co as}(ii) and \ref{ini as}(iii) hold. 
Then for all $t>0$
$$
\| \Delta^{\gamma/2}  p_k (t,\cdot) \|_{L_1}
\leq N2^{k\gamma} \exp\left(- c \int_0^t \delta(s)ds \cdot 2^{2k} \right),
$$
where $c$ and $N$ are some positive constants depending only on $d$, $\gamma$, and $\bar N_0$.
\end{lemma}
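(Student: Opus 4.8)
The plan is to do everything on the Fourier side. Write $B^{ij}(t):=\int_0^t a^{ij}(r)\,dr$, so that $\cF[\Delta^{\gamma/2}p_k(t,\cdot)](\xi)=|\xi|^{\gamma}\,\hat\Psi(2^{-k}\xi)\exp\big(-B^{ij}(t)\xi^i\xi^j\big)$, a smooth function supported in the annulus $\{2^{k-1}\le|\xi|\le2^{k+1}\}$. The substitution $\xi=2^k\eta$ in the Fourier inversion formula yields $\Delta^{\gamma/2}p_k(t,x)=2^{k\gamma}2^{kd}g_{k,t}(2^kx)$, where
\begin{align*}
g_{k,t}(y):=\frac{1}{(2\pi)^d}\int_{\fR^d}e^{iy\cdot\eta}\,|\eta|^{\gamma}\hat\Psi(\eta)\exp\!\big(-2^{2k}B^{ij}(t)\eta^i\eta^j\big)\,d\eta,
\end{align*}
and since $\|h(2^k\cdot)\|_{L_1}=2^{-kd}\|h\|_{L_1}$ we get $\|\Delta^{\gamma/2}p_k(t,\cdot)\|_{L_1}=2^{k\gamma}\|g_{k,t}\|_{L_1}$. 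So it is enough to show $\|g_{k,t}\|_{L_1}\le N\exp(-c\,2^{2k}\int_0^t\delta(s)\,ds)$ with $N,c>0$ depending only on $d,\gamma,\bar N_0$ (and the fixed $\Psi$).

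For this I would use the standard device of trading the $L_1$-norm for a weighted $L_\infty$-norm. Since $(1+|y|^2)^{-d}\in L_1(\fR^d)$, $\cF^{-1}[(1-\Delta_\eta)^d\widehat{g_{k,t}}](y)=(1+|y|^2)^dg_{k,t}(y)$, and $\widehat{g_{k,t}}$ (hence also $(1-\Delta_\eta)^d\widehat{g_{k,t}}$) is supported in the fixed annulus $\{1/2\le|\eta|\le2\}$, whose Lebesgue measure is a dimensional constant, we obtain
\begin{align*}
\|g_{k,t}\|_{L_1}\le\big\|(1+|y|^2)^{-d}\big\|_{L_1}\big\|(1+|y|^2)^dg_{k,t}\big\|_{L_\infty}\le N\big\|(1-\Delta_\eta)^d\widehat{g_{k,t}}\big\|_{L_1}\le N\big\|(1-\Delta_\eta)^d\widehat{g_{k,t}}\big\|_{L_\infty}.
\end{align*}
Thus everything reduces to a pointwise bound, on the annulus, for $(1-\Delta_\eta)^d$ applied to $\widehat{g_{k,t}}(\eta)=\phi(\eta)\exp(-2^{2k}Q_t(\eta))$, where $\phi(\eta):=|\eta|^{\gamma}\hat\Psi(\eta)$ is $C^\infty$ with compact support in the annulus (so all its derivatives are bounded by constants depending only on $d,\gamma,\Psi$) and $Q_t(\eta):=B^{ij}(t)\eta^i\eta^j$.

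The exponential decay is supplied by the two halves of the hypotheses. Applying Assumption \ref{co as}(ii) pointwise in $r$ gives $Q_t(\eta)\ge\big(\int_0^t\delta(r)\,dr\big)|\eta|^2\ge\tfrac14\int_0^t\delta(r)\,dr$ for $|\eta|\ge1/2$; Assumption \ref{ini as}(iii) gives $|B^{ij}(t)|\le\bar N_0\int_0^t\delta(r)\,dr$, so $Q_t$ together with its (linear, resp.\ constant) first and second derivatives is $\le N(d,\bar N_0)\int_0^t\delta(r)\,dr$ on $\{|\eta|\le2\}$. By the chain and Leibniz rules, every derivative of order $\le2d$ of $\exp(-2^{2k}Q_t)$ on the annulus is dominated by $N\big(1+(2^{2k}\int_0^t\delta(r)\,dr)^{2d}\big)\exp(-2^{2k}Q_t)$; using the lower bound on $Q_t$ and the elementary inequality $x^me^{-x/4}\le N(m)e^{-x/8}$ $(x\ge0)$, this is at most $N\exp(-\tfrac18\,2^{2k}\int_0^t\delta(r)\,dr)$. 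Multiplying by the uniformly bounded derivatives of $\phi$ and summing the finitely many terms coming from $(1-\Delta_\eta)^d$ gives $\|(1-\Delta_\eta)^d\widehat{g_{k,t}}\|_{L_\infty}\le N\exp(-c\,2^{2k}\int_0^t\delta(r)\,dr)$ with $c=1/8$, $N=N(d,\gamma,\bar N_0)$, and the lemma follows.

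The part to be careful with — routine but where a slip would go unnoticed — is the last bookkeeping step: one must check that differentiating the Gaussian-type factor produces only polynomial growth in the single scalar $2^{2k}\int_0^t\delta(r)\,dr$ (never in $2^{2k}$ alone, and never in the possibly large coefficient bound $M$), so that this growth is swallowed by half of the available exponential decay, and that the emerging constants depend only on $d,\gamma,\bar N_0$. The rescaling, the $L_1$-via-weighted-$L_\infty$ reduction, and the support bookkeeping are all standard.
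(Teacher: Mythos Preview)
Your proof is correct and follows essentially the same approach as the paper: rescale to pull out $2^{k\gamma}$, then control the $L_1$-norm of the rescaled kernel by bounding $(1-\Delta_\eta)^d$ (the paper uses $1+\Delta_\xi^d$) of its Fourier transform on the fixed annulus, using Assumption~\ref{co as}(ii) for the lower bound on the quadratic form and Assumption~\ref{ini as}(iii) to ensure the derivatives of $Q_t$ bring in only powers of $\int_0^t\delta$, which are then absorbed by half the exponential. Your write-up is more explicit about the bookkeeping (in particular, why $M$ never appears in the constants), but the argument is the same.
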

\begin{proof}
Denote
$$
q_k(t,x)=\cF^{-1} \left[|\xi|^{\gamma} \hat \Psi ( \xi ) \exp\left(-\int_{0}^{t} a^{ij}(s)ds \cdot 2^{2k}\xi^i \xi^j \right) \right](x),
$$
and 
$$\hat q_k (t,\xi) = \cF[q_k(t,\cdot)](\xi).
$$
Then considering the Fourier transform, one can easily check
\begin{align}
					\label{pq rel}
\Delta^{\gamma/2}p_k(t,x)= 2^{kd} 2^{k\gamma} q_k (t,2^k x).
\end{align}
Obviously, for all constants $a \geq 0$ and $c>0$,
\begin{equation}
   \label{ob}
\sup_{x>0} x^{a} e^{-cx}<\infty.
\end{equation}
Recalling that  $\hat \Psi (\xi )$ has the support in $B_2 \setminus B_{1/2}$,
differentiating $\hat{q}_k$ for $2d$-times, and using   (\ref{ob}), Assumptions \ref{co as}(ii) and \ref{ini as}(iii), one can find positive constants $N$ and $c$ such that for all $(t,\xi) \in (0,T) \times \fR^d$,
\begin{align}
                    \notag
\Big| \hat q_k(t,\xi)\Big|+\Big|\Delta^d_{\xi} \hat q_k(t,\xi)\Big|
&\leq N 1_{(2^{-1},2)}(|\xi|)\exp\left(- 4c \int_0^t \delta(s)ds \cdot |2^k \xi|^2\right).
\end{align}
Thus
\begin{align*}
\left|\left( 1+|x|^{2d}\right)q_j(t,x)\right|
&= \left|\cF^{-1} \left[ \left(1+\Delta^d_\xi \right) \hat q_k(t,\xi)\right](x) \right| \\
&\leq N\sup_{ |\xi| \in (2^{-1},2)}\left|  \exp \left(- 4c \int_0^t \delta(s)ds \cdot |2^k \xi|^2 \right) \right| \\
&\leq N\exp \left(- c \int_0^t \delta(s)ds \cdot 2^{2k}  \right),
\end{align*}
and
\begin{align*}
\|q_k(t,\cdot)\|_{L_1}&=\int_{|x|\leq 1} |q_k(t,x)|dx+ \int_{|x|\geq 1} \left(|x|^{2d} |q_k(t,x)| \right) |x|^{-2d} dx \\
& \leq N\exp \left(- c \int_0^t \delta(s)ds \cdot 2^{2k}  \right).
\end{align*}
Therefore by (\ref{pq rel}),
\begin{align*}
\|\Delta^{\gamma/2} p_k(t,\cdot)\|_{L_1} \leq N 2^{k\gamma} \exp \left(- c \int_0^t \delta(s)ds \cdot 2^{2k}  \right).
\end{align*}
The lemma is proved.
\end{proof}
Let $f \in \cS(\fR^d)$ and denote $f_i = \Delta_i f$.
Note that Littlewood-Paley operators have the following  orthogonal property:
\begin{align}
			                    \notag
&\Delta^{\gamma/(2p)}p(t,\cdot) \ast f(x) \\
						\notag
&=\left(\sum_{ j \in \bZ}^\infty \Delta^{\gamma/(2p)}p_j (t,\cdot)\right) \ast \left(S_0(f) + \sum_{i=1}^\infty f_i \right)(x) \\
                    \label{little ortho}
&=\sum^{-1}_{j=-\infty}  \Delta^{\gamma/(2p)}p_j(t,\cdot) \ast S_0(f)(x)
+\sum^\infty_{i =1} \sum_{ i-1 \leq j \leq i+1 } \Delta^{\gamma/(2p)}p_j(t,\cdot) \ast f_i(x).
\end{align}
This is because the intersection of the supports of  $\hat \Psi(2^{-i} \xi)$ and $\hat \Psi (2^{-j} \xi)$ are nonempty only if  $i-1 \leq j \leq i+1$.
In particular,
\begin{align}
                    \label{little ortho 2}
p(t,\cdot) \ast f(x) 
=p(t,\cdot) \ast \psi \ast S_0(f)(x)
+\sum^\infty_{i =1} \sum_{ i-1 \leq j \leq i+1 }p_j(t,\cdot) \ast f_i(x),
\end{align}
where
$$
\psi(x) :=  \cF^{-1} \left[  \sum_{j=-\infty}^{-1}  \hat \Psi (2^{-j} \xi) \right](x).
$$
\begin{lemma}
					\label{0504 lem 1}
Let $T \in (0,\infty)$ and $\gamma  \in \left[ 0 ,   \frac{2}{\beta p}\right]$.
Suppose that Assumptions \ref{co as} and \ref{ini as} hold.  Then for any  $f \in \cS(\fR^d)$,  
\begin{equation*}
\| \Delta^{\gamma/2} \cT_t f\|^p_{(0,T) \times \fR^d}\leq N \left( \|S_0(f)\|^p_{L_p}+ \sum_{j=1}^\infty \|f_j\|^p_{L_p} \right),
\end{equation*}
where $N$ is  a positive constant depending  only on $d$, $p$, $\beta$, $N_0$,$\bar N_0$, $T$, and $\int_{0}^{t_0} \delta(s)ds$. 
\end{lemma}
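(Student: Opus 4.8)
The plan is to decompose $\Delta^{\gamma/2}\cT_t f$ using the Littlewood--Paley orthogonality identity \eqref{little ortho 2}, estimate each piece in $L_p((0,T)\times\fR^d)$ via Young's inequality together with the kernel bound of Lemma \ref{kernel l1 est}, and then perform the resulting integral in $t$ using the measure-theoretic hypothesis \eqref{0427 e 1}. First I would write
\[
\Delta^{\gamma/2}\cT_t f(x)=\Delta^{\gamma/2}p(t,\cdot)\ast\psi\ast S_0(f)(x)+\sum_{i=1}^\infty\sum_{i-1\le j\le i+1}\Delta^{\gamma/2}p_j(t,\cdot)\ast f_i(x),
\]
so that by Young's convolution inequality, $\|\cdot\|_{L_p(\fR^d)}$ of the high-frequency part is bounded by $\sum_{i\ge1}\sum_{|j-i|\le1}\|\Delta^{\gamma/2}p_j(t,\cdot)\|_{L_1}\|f_i\|_{L_p}$, and Lemma \ref{kernel l1 est} gives $\|\Delta^{\gamma/2}p_j(t,\cdot)\|_{L_1}\le N2^{j\gamma}\exp(-c\int_0^t\delta(s)ds\,2^{2j})$. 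Combining the three nearby $j$'s into a single constant, the $i$-th term is controlled by $N2^{i\gamma}\exp(-c\,\beta(t)2^{2i})\|f_i\|_{L_p}$ where $\beta(t):=\int_0^t\delta(s)ds$. For the low-frequency term I would absorb $\psi$ (a fixed Schwartz convolver, $\|\psi\|_{L_1}<\infty$) and use that $\sum_{j\le-1}\|\Delta^{\gamma/2}p_j(t,\cdot)\|_{L_1}\le N\sum_{j\le-1}2^{j\gamma}<\infty$ uniformly in $t$ (since $\gamma\ge0$ and the exponential is $\le1$), giving a bound $N\|S_0(f)\|_{L_p}$ uniformly in $t\le T$.

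Next I would raise to the $p$-th power and integrate in $t\in(0,T)$. Using the triangle inequality in $\ell_p$ (or, to avoid $\ell_1$-into-$\ell_p$ losses, splitting $(0,T)=(0,t_0)\cup(t_0,T)$ and handling each separately), the key quantity to bound is, for each $i\ge1$,
\[
I_i:=\int_0^T 2^{i\gamma p}\exp\!\big(-cp\,\beta(t)2^{2i}\big)\,dt.
\]
On $(t_0,T)$ one has $\beta(t)\ge\beta(t_0)=\int_0^{t_0}\delta(s)ds>0$ (using Assumption \ref{ini as}(i)), so the integrand decays like $\exp(-c'2^{2i})$ and $\sum_i I_i\cdot\!\restriction_{(t_0,T)}$ is a convergent constant times any fixed norm; this contributes $N(\|S_0(f)\|_{L_p}^p+\sum_j\|f_j\|_{L_p}^p)$ trivially. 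The real work is on $(0,t_0)$.

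The main obstacle — and the place where hypothesis \eqref{0427 e 1} enters — is bounding $\int_0^{t_0}\exp(-cp\,\beta(t)2^{2i})\,dt$ by a constant times $2^{-2i/\beta}$, since then $2^{i\gamma p}\cdot 2^{-2i/\beta}\le 1$ precisely because $\gamma\le\frac{2}{\beta p}$, and the sum over $i$ converges after pairing each $2^{i\gamma p}2^{-2i/\beta}$ against $\|f_i\|_{L_p}^p$ (when $\gamma<\frac{2}{\beta p}$ one even gets a geometric series; for the endpoint $\gamma=\frac{2}{\beta p}$ one gets exactly $\sum_i\|f_i\|_{L_p}^p$). To get the bound $\int_0^{t_0}\exp(-cp\,\beta(t)2^{2i})\,dt\le N2^{-2i/\beta}$ I would slice the range of $\beta$ dyadically: write $\{t\in[0,t_0]\}=\bigcup_{m}\{t\in[0,t_0]:4^m\le\beta(t)2^{2i}<4^{m+1}\}$ over $m\in\bZ$ (plus the set where $\beta(t)2^{2i}<1$, whose measure is at most $|\{t\le t_0:\beta(t)<2^{-2i}\}|\le N_02^{-2i/\beta}$ by \eqref{0427 e 1} directly). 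On the $m$-th slice the integrand is at most $e^{-cp4^m}$ and the slice has measure $|\{t\le t_0: 4^m2^{-2i}\le\beta(t)<4^{m+1}2^{-2i}\}|\le N_0(4^m2^{-2i})^{1/\beta}=N_0 4^{m/\beta}2^{-2i/\beta}$ by \eqref{0427 e 1} with $h=4^m2^{-2i}$. Summing, $\int_0^{t_0}\exp(-cp\,\beta(t)2^{2i})\,dt\le N_02^{-2i/\beta}\sum_{m\in\bZ}e^{-cp4^m}4^{m/\beta}\le N(p,\beta,N_0)\,2^{-2i/\beta}$, the series over $m$ converging at both ends. Assembling the low- and high-frequency contributions over $(0,t_0)$ and $(t_0,T)$ then yields the claimed inequality with $N$ depending only on $d,p,\beta,N_0,\bar N_0,T$ and $\beta(t_0)=\int_0^{t_0}\delta(s)ds$. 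I expect the bookkeeping in the dyadic slicing (keeping track that the $m<0$ tail of $\sum_m e^{-cp4^m}4^{m/\beta}$ converges, which it does since $4^{m/\beta}\to0$) to be the only delicate point; everything else is Young's inequality plus Lemma \ref{kernel l1 est}.
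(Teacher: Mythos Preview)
Your overall plan---Littlewood--Paley decomposition, Young's inequality, the kernel bound of Lemma~\ref{kernel l1 est}, and dyadic slicing of $\{t:\beta(t)\in[h,4h)\}$ via \eqref{0427 e 1}---is exactly the paper's route. But there is a genuine gap in the step where you declare that ``the key quantity to bound is $I_i=\int_0^T 2^{i\gamma p}\exp(-cp\,\beta(t)2^{2i})\,dt$'' and then aim for $\sum_i I_i\|f_i\|_{L_p}^p$. After Young and Lemma~\ref{kernel l1 est} you have a pointwise bound $\|\Delta^{\gamma/2}\cT_t f\|_{L_p}\le N\sum_{i\ge1}2^{i\gamma}e^{-c\beta(t)2^{2i}}\|f_i\|_{L_p}$, and the object you must control is $\int_0^T\bigl(\sum_i\ldots\bigr)^p\,dt$, not $\int_0^T\sum_i(\ldots)^p\,dt$. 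Splitting $(0,T)$ into $(0,t_0)\cup(t_0,T)$ does nothing for this $\ell_1$-into-$\ell_p$ issue. If instead you use Minkowski in $t$, you get $\bigl(\sum_i I_i^{1/p}\|f_i\|_{L_p}\bigr)^p$; your bound $I_i\le N\,2^{(\gamma p-2/\beta)i}$ then gives $I_i^{1/p}\le N\,2^{(\gamma-2/(\beta p))i}$, which decays only when $\gamma<2/(\beta p)$. At the endpoint $\gamma=2/(\beta p)$---precisely the value used in the proof of Theorem~\ref{main thm 2}---you are left with $\bigl(\sum_i\|f_i\|_{L_p}\bigr)^p$, a $B^0_{p,1}$-type quantity that is \emph{not} controlled by $\sum_i\|f_i\|_{L_p}^p$.

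The paper closes this gap by applying H\"older's inequality in the $j$-sum \emph{before} integrating in $t$, with weights $2^{\pm j/(\beta p)}$: splitting the sum into $\{j:\beta(t)2^{2j}\le1\}$ and $\{j:\beta(t)2^{2j}>1\}$, one obtains $\bigl(\sum_j\ldots\bigr)^p\le C(t)\sum_j 2^{(\gamma p-1/\beta)j}\|f_j\|_{L_p}^p$ where the prefactor $C(t)$ is a negative power of $\beta(t)$. After Fubini, the $t$-integral $\int_0^{t_0}C(t)\,1_{\{\beta(t)\lessgtr 2^{-2j}\}}\,dt$ is then estimated by your dyadic slicing of \eqref{0427 e 1}, and this produces an \emph{additional} factor $2^{-j/\beta}$---so the final exponent is $\gamma p-2/\beta\le0$, covering the endpoint. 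In short, the H\"older-in-$j$ step is not bookkeeping; it is the mechanism that converts the $\ell_1$ sum into an $\ell_p$ sum while depositing the cost into a $t$-integrable factor.

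A second, smaller slip: your low-frequency bound $\sum_{j\le-1}2^{j\gamma}<\infty$ is false at $\gamma=0$. The paper treats $\gamma=0$ separately, using $\|p(t,\cdot)\|_{L_1}=1$ (it is a probability density) together with $\|\psi\ast S_0(f)\|_{L_p}\le N\|S_0(f)\|_{L_p}$.
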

\begin{proof}
By \eqref{little ortho} and Young's convolution inequality,
\begin{align*}
\int_0^T \|\Delta^{\gamma/2} \cT_t f (t,\cdot)\|^p_{L_p} dt
&\leq \int_0^T \left( \sum^1_{j=-\infty} \|\Delta^{\gamma/2}p_j(t,\cdot)\|_{L_1}  \|S_0(f)\|_{L_p} \right)^p dt \\
\quad &\quad +\int_0^T \left(\sum^\infty_{i=1} \sum_{ i-1 \leq j \leq i+1 } \|\Delta^{\gamma/2}p_j(t,\cdot)\|_{L_1}  \|f_i\|_{L_p} \right)^p dt.
\end{align*}
By Lemma \ref{kernel l1 est}, 
\begin{align*}
\int_0^T \left( \sum^{-1}_{j=-\infty} \|\Delta^{\gamma/2}p_j(t,\cdot)\|_{L_1}  \|S_0(f)\|_{L_p} \right)^p dt 
&\leq \|S_0(f)\|^p_{L_p} \left(\sum^{-1}_{j = -\infty} 2^{j\gamma}\right)^p.
\end{align*}
Obviously, if $\gamma>0$ then
$$
 \left(\sum^{-1}_{j = -\infty} 2^{j\gamma}\right)^p \leq N.
$$
For the case $\gamma =0$, we apply \eqref{little ortho 2} and get
\begin{align*}
\int_0^T \| \cT_t f (t,\cdot)\|^p_{L_p} dt
&\leq \int_0^T \left(\| p(t,\cdot)\|_{L_1}  \|\psi \ast S_0(f)\|_{L_p} \right)^p dt \\
\quad &\quad +\int_0^T \left(\sum^\infty_{i=1} \sum_{ i-1 \leq j \leq i+1 } \|p_j(t,\cdot)\|_{L_1}  \|f_i\|_{L_p} \right)^p dt.
\end{align*}
Since $p(t,x)$ is the probability density function of the stochastic process $X_t$ introduced in (\ref{pdf x}), 
\begin{align}
					\label{e 1028 1}
\|p(t,\cdot)\|_{L_1}=1.
\end{align}
Moreover, since $\psi(x)$ is contained in the Schwartz class,
\begin{align}
					\label{e 1028 2}
\|\psi \ast S_0(f)\|_{L_p}  \leq N \| S_0(f)\|_{L_p}.
\end{align}
Combining (\ref{e 1028 1}) and (\ref{e 1028 2}), we have
\begin{align*}
\int_0^T \left(\| p(t,\cdot)\|_{L_1}  \|\psi \ast S_0(f)\|_{L_p} \right)^p dt 
\leq N\int_0^T  \|S_0(f)\|_{L_p}^p dt.
\end{align*}
Therefore, it is sufficient to estimate the following term
$$
\int_0^T \left(\sum^\infty_{i=1} \sum_{ i-1 \leq j \leq i+1 } \|\Delta^{\gamma/2}p_j(t,\cdot)\|_{L_1}  \|f_i\|_{L_p} \right)^p dt.
$$
By Lemma \ref{kernel l1 est}, the above term is less than or equal to constant times of 
\begin{align*}
\cI:=\int_0^T \left(\sum^\infty_{j=1} 2^{j\gamma} e^{-(c/4)\int_0^t \delta(s)ds~ \cdot 2^{2j}} \|f_j\|_{L_p} \right)^p dt.
\end{align*}
To estimate $\cI$, we consider the decomposition 
$$
\cI=\int^{T}_{t_0} \cdots dt + \int^{t_0}_0 \cdots dt.
$$
Define $\kappa_0:=\int^{t_0}_0 \delta(s) ds>0$ and observe 
$$
\sum_{j\geq 1} 2^{j\gamma q} e^{-cq\kappa_0 2^{2j}} <\infty,
$$
where $q:=p/(p-1)$. Therefore, by H\"older's inequality
\begin{align*}
& \int_{t_0}^T \left(\sum^\infty_{j=1} 2^{j \gamma} e^{-c\int_0^t \delta(s)ds~ \cdot 2^{2j}} \|f_j\|_{L_p}\right)^pdt \leq T 
 \left(\sum^\infty_{j=1} 2^{j \gamma} e^{-c\kappa_0 \cdot 2^{2j}} \|f_j\|_{L_p}\right)^p \\
&\leq T  \left(\sum^\infty_{j=1} 2^{j\gamma q} e^{-cq 2^{2j}}\right)^{1/q} \sum_{j=1}^{\infty} \|f_j\|^p_{L_p} \leq N\sum_{j=1}^{\infty} \|f_j\|^p_{L_p} .
\end{align*}
Thus it only remains to show 
\begin{align*}
\int_0^{t_0} \left(\sum^\infty_{i=1} 2^{j\gamma} e^{-(c/4)\int_0^t \delta(s)ds~ \cdot 2^{2j}} \|f_j\|_{L_p}\right)^p dt
\leq N\sum^\infty_{i=1} \|f_j\|^p_{L_p} dt.
\end{align*}
Note that
\begin{align*}
&\int_0^{t_0} \left(\sum^\infty_{j=1} 2^{j\gamma} e^{-(c/4)\int_0^t \delta(s)ds~ \cdot 2^{2j}} \|f_j\|_{L_p}\right)^pdt \leq 2^p (\cI_{1} + \cI_{2}),
\end{align*}
where
$$
\cI_{1}=\int_0^{t_0} \left(\sum_{2^{2j}\int_0^t \delta(s)ds \leq 1} 2^{j\gamma} e^{-c\int_0^t \delta(s)ds~ \cdot 2^{2j}} \|f_j\|_{L_p}\right)^pdt
$$
and
$$
\cI_{2}=\int_0^{t_0} \left(\sum_{2^{2j}\int_0^t \delta(s)ds> 1} 2^{j\gamma} e^{-c\int_0^t \delta(s)ds~ \cdot 2^{2j}} \|f_j\|_{L_p}\right)^pdt.
$$
First we estimate $\cI_{1}$.
Observe that  $2^{2j} \int_0^t \delta(s)ds   \leq 1$ if and only if $j \leq  \log_2 \left(\int_0^t \delta(s)ds \right)^{-1/2}$.  
Using H\"older's inequality and Fubini's theorem (also use $1 =2^{\frac{j}{\beta p}} 2^{-\frac{j}{\beta p} }$), we get (recall $q=p/(p-1)$),
\begin{align*}
\cI_{1}
&\leq \int_0^{t_0} \left(\sum_{\int_0^t \delta(s)ds~ \cdot 2^{2j} \leq 1} 2^{ \frac{q j}{\beta p}} \right)^{p-1} 
\sum^\infty_{\int_0^t \delta(s)ds~ \cdot 2^{2j} \leq 1} 2^{ \left(\gamma p- \frac{1}{\beta} \right) j}  \|f_j\|^p_{L_p} dt \\
&\leq N \int_0^{t_0} \left(\int_0^t \delta(s)ds \right)^{-\frac{1}{2\beta}} 
\sum_{\int_0^t \delta(s)ds~ \cdot 2^{2j} \leq 1} 2^{ \left(\gamma p- \frac{1}{\beta} \right) j}  \|f_j\|^p_{L_p} dt \\
&\leq N \sum^\infty_{j=1} 2^{ \left(\gamma p- \frac{1}{\beta} \right) j}  \|f_j\|^p_{L_p}  \int_0^{t_0}
1_{ \int_0^t \delta(s)ds \cdot 2^{2j} \leq 1}(t) \left(\int_0^t \delta(s)ds \right)^{-\frac{1}{2\beta}} dt.
\end{align*}
Due to Assumption \ref{ini as}(ii),
\begin{align*}
&\int_0^{t_0}1_{ \int_0^t \delta(s)ds \cdot 2^{2j} \leq 1}(t) \left(\int_0^t \delta(s)ds \right)^{- \frac{1}{2 \beta}} dt \\
&\leq \sum_{m=0}^\infty \left| \left\{ t \in (0,t_0) : 2^{-2(j+m+1)}\leq \int_0^t \delta(s)ds \leq 2^{-2(j+m)} \right\} \right|
2^{ \frac{j+m+1}{\beta}} \\
&\leq N 2^{ -\frac{j}{\beta}  } \sum_{m=0}^\infty 2^{-\frac{m}{\beta} }\leq N 2^{ -\frac{j}{\beta}}.
\end{align*}
Therefore,
$$
\cI_{1} 
\leq N \sum^\infty_{j=1} 2^{ \left(\gamma p- \frac{2}{\beta} \right) j} \|f_j\|^p_{L_p}
\leq  N \sum^\infty_{j=1} \|f_j\|^p_{L_p}.
$$
Next we estimate $\cI_2$.
For $b \geq 0$, using the equality $1= 2^{bj} 2^{-bj}$ and the H\"older inequality, we get 
\begin{align*}
&\cI_{2} 
\leq \int_0^{t_0} \left(\sum_{\int_0^t \delta(s)ds~ \cdot 2^{2j} > 1} 2^{ ( \gamma +b) q j}  e^{-cq\int_0^t \delta(s)ds~ \cdot 2^{2j}} \right)^{p-1} \sum^\infty_{\int_0^t \delta(s)ds~ \cdot 2^{2j} > 1} 2^{-    b p j }  \|f_j\|^p_{L_p} dt.
\end{align*}
By (\ref{ob}), for any positive constant $a > \frac{\gamma + b}{2}$,
$$
e^{-cq \int^t_0 \delta(s)ds \cdot 2^{2j}}\leq N 2^{-2  a q j } \left(\int^t_0 \delta(s)ds\right)^{-a q}.
$$
Thus by Assumption \ref{ini as}(ii),
\begin{align}
					\notag
&\cI_{2}\leq \int_0^{t_0} \left( \left( \int_0^t \delta(s) ds \right)^{-a q}
 \sum_{\int_0^t \delta(s)ds~ \cdot 2^{2j} > 1} 2^{ ( \gamma +b -2a) q j}  \right)^{p-1}  \\
					\notag
&\qquad \times  \sum^\infty_{\int_0^t \delta(s)ds~ \cdot 2^{2j} > 1} 2^{-b p j  }  \|f_j\|^p_{L_p} dt \\
					\notag
&\leq N\int_0^{t_0} \left( \int_0^t \delta(s) ds \right)^{-\frac{(\gamma + b)p}{2}} 
\sum^\infty_{\int_0^t \delta(s)ds~ \cdot 2^{2j} > 1} 2^{-bpj}  \|f_j\|^p_{L_p} dt \\
					\notag
&\leq N \sum^\infty_{j=1}  2^{- b p j}  \|f_j\|^p_{L_p}  
\int_{0}^{t_0} \left( \int_0^t \delta(s) ds \right)^{-\frac{(\gamma + b)p}{2}}
1_{\int_0^t \delta(s)ds~ \cdot 2^{2j} > 1}(t) dt\\
					\notag
&\leq N \sum^\infty_{j=1}  \sum_{m=0}^\infty   2^{ \gamma p j} 
2^{- (\gamma +b) p m } \|f_j\|^p_{L_p}  \\
					\notag
&\qquad \times \left| \left\{ t \in [0,t_0] : 2^{-2(j-m)} \leq  \int_0^t \delta(s)ds\leq 2^{-2(j-m-1)} \right\} \right| \\
					\label{e 1029 1}
&\leq N\sum_{m=0}^{\infty} 2^{ \left( \frac{2}{\beta} - (\gamma+b) p \right) m}  
\sum^\infty_{j=1}  2^{ \left(  \gamma p - \frac{2}{\beta}   \right) j } \|f_j\|^p_{L_p}
\end{align}
Take $b = \frac{4}{\beta p}$. Then
$$
\frac{2}{\beta} - (\gamma+b) p < 0.
$$
Therefore, from (\ref{e 1029 1}), we have
$$
\cI_{2} \leq N\sum^\infty_{j=1}  2^{ \left(  \gamma p - \frac{2}{\beta}   \right) j } \|f_j\|^p_{L_p} 
\leq N \sum^\infty_{j=1}  \|f_j\|^p_{L_p}.
$$
The lemma is proved.
\end{proof}

 We continue the proof of the theorem and  assume $u_0 \in \cS(\fR^d)$. 
Define $v=(1-\Delta)^{(1-1/(\beta p))}u$
and $v_0:=  (1-\Delta)^{(1-1/(\beta p))} u_0$.
Then obviously $v_0 \in \cS(\fR^d)$ and $v$ satisfies 
$$
v_t=a^{ij}(t)v_{x^ix^j}, \quad t \in (0,T); \quad v(0,\cdot)=v_0.
$$
Thus by (\ref{formula}) and  Lemma \ref{0504 lem 1},
 \begin{align}
						\label{e 1028 4}
\|v\|_{\bH_p^{2/(\beta p)}}  
\leq N \left( \|v\|_{\bL_p} +\| \Delta^{1/(\beta p)} v\|_{\bL_p}    \right)
&\leq N \left( \|S_0 (v_0)\|_{L_p}+ \sum_{j=1}^\infty \|   (v_0)_j\|^p_{L_p} \right),
 \end{align}
where the first  inequality can be easily induced from the classical multiplier theorem (e.g. \cite[Theorem 5.2.7]{grafakos2008classical}).
Since  the operator $(1-\Delta)^{(1-1/(\beta p))}$ is  an isometry  from $H^{2/(\beta p)}$ (resp. $B^0_p$) to $H^2_p$ 
(resp. $B_p^{2 \left(1-1/(\beta p) \right)}$) (see \cite[Theorem 6.2.7]{bergh1976interpolation} ), 
it follows from (\ref{e 1028 4}) that 
  \begin{equation}
     \label{real final}
  \|u\|_{\bH^2_p(T)}\leq N \|u_0\|_{B^{2 \left(1-1/(\beta p) \right)}_p}.
  \end{equation}
  For general $u_0$, it is enough to apply a standard approximation based on (\ref{real final}). The theorem is proved.  \qed

%
%

\mysection{Acknowledgement}
We are sincerely grateful to the referee.
Especially, we could considerably enhance Theorem \ref{main thm 1} due to the referee.


\begin{thebibliography}{10}

\bibitem{bergh1976interpolation}
J.~Bergh and J.~L{\"o}fstr{\"o}m.
\newblock Interpolation spaces. an introduction.
\newblock 1976.

\bibitem{fornaro2012degenerate}
S.~Fornaro, G.~Metafune, D.~Pallara, and R.~Schnaubelt.
\newblock Degenerate operators of tricomi type in lp-spaces and in spaces of
  continuous functions.
\newblock {\em Journal of Differential Equations}, 252(2):1182--1212, 2012.

\bibitem{freidlin1969smoothness}
M.~Freidlin.
\newblock Smoothness of solutions of degenerating elliptic equations.
\newblock Technical report, DTIC Document, 1969.

\bibitem{gerencser2015solvability}
M.~Gerencs{\'e}r, I.~Gy{\"o}ngy, and N.~Krylov.
\newblock On the solvability of degenerate stochastic partial differential
  equations in sobolev spaces.
\newblock {\em Stochastic Partial Differential Equations: Analysis and
  Computations}, 3(1):52--83, 2015.



\bibitem{grafakos2008classical}
L.~Grafakos.
\newblock {\em Classical {Fourier} analysis}, volume 249.
\newblock Springer, 2008.



\bibitem{grafakos2009modern}
L.~Grafakos.
\newblock {\em Modern {Fourier} Analysis}, volume 250.
\newblock Springer, 2009.

\bibitem{kim2007sobolev}
K.-H. Kim.
\newblock Sobolev space theory of parabolic equations degenerating on the
  boundary of  $C^1$-domains.
\newblock {\em Communications in Partial Differential Equations},
  32(8):1261--1280, 2007.

\bibitem{kim2017heat}
K.-H. Kim and K.~Lee.
\newblock On the heat diffusion starting with degeneracy.
\newblock {\em Journal of Differential Equations}, 262(3):2722--2744, 2017.

\bibitem{kohn1967degenerate}
J.~J. Kohn and L.~Nirenberg.
\newblock Degenerate elliptic-parabolic equations of second order.
\newblock {\em Communications on Pure and Applied Mathematics}, 20(4):797--872,
  1967.

\bibitem{krylov1986characteristics}
N.~V. Krylov and B.~Rozovskii.
\newblock Characteristics of degenerating second-order parabolic ito equations.
\newblock {\em Journal of Soviet Mathematics}, 32(4):336--348, 1986.



\bibitem{Krylov1995}
N.~V. Krylov.
\newblock {\em Introduction to the theory of diffusion processes},
  volume~96.
\newblock American Mathematical Society, 1995.



\bibitem{Krylov2008}
N.~V. Krylov.
\newblock {\em Lectures on Elliptic and Parabolic Equations in Sobolev Spaces},
  volume~96.
\newblock American Mathematical Society Providence, RI, 2008.

\bibitem{Krylov1994} N.~V. Krylov.
\newblock {\em A parabolic Littlewood-Paley inequality with applications to parabolic equations}
\newblock Topological Methods in Nonlinear Analysis, Journal of the Juliuss Schauder Center,  {\bf{4}} (1994), 355-364.

\bibitem{O} O.A. Ole\u{i}nik,  {\em Alcuni risultati sulle equazioni lineari e quasi lineari ellitticoparaboliche
a derivate parziali del secondo ordine}, (Italian) Atti Accad. Naz. Lincei
Rend. Cl. Sci. Fis. Mat. Natur., (8) 40, (1966), 775-784.


\bibitem{O1} O.A. Ole\u{i}nik, {\em On the smoothness of solutions of degenerating elliptic and parabolic equations}, Dokl. akad. Nauk SSSR, {\bf{163}} (1965), 577--580 in Russian; English translation in Soviet Mat. Dokl, {\bf{6}} (1965), no. 3, 972-976.

\bibitem{O2} O. A. Ole\u{i}nik and E. V. Radkevi\v{c}, Second order equations with nonnegative characteristic
form, Mathematical Analysis, 1969, pp. 7-252. (errata insert) Akad. Nauk
SSSR, Vsesojuzn. Inst. Nau\v{c}n. i Tehn. Informacii, Moscow, 1971 in Russian; English
translation: Plenum Press, New York-London, 1973.


\bibitem{O3} O. A. Ole\u{i}nik and E. V. Radkevich, {\em Second Order Equations with Nonnegative
Characteristic Form}, AMS, Providence 1973.



\bibitem{oleinik2012second}
O .A. Oleinik.
\newblock {\em Second-order equations with nonnegative characteristic form}.
\newblock Springer Science \& Business Media, 2012.


\bibitem{stroock1972degenerate}
D.~Stroock and S.~Varadhan.
\newblock On degenerate elliptic-parabolic operators of second order and their
  associated diffusions.
\newblock {\em Communications on Pure and Applied Mathematics}, 25(6):651--713,
  1972.

\end{thebibliography}
\end{document}